\documentclass[11pt]{elsarticle}
\usepackage{geometry}
\usepackage{sectsty}

\usepackage{amsmath}
\usepackage{amssymb}
\usepackage{mathtools} 
\usepackage{amsthm}
\usepackage{thmtools}
\usepackage[hidelinks]{hyperref}
\usepackage[nameinlink]{cleveref}

\usepackage{color}
\usepackage{graphicx}

\newcommand{\reals}{\ensuremath{\mathbb{R}}}
\newcommand{\norm}[1]{\ensuremath{\left\vert\left\vert#1\right\vert\right\vert}}
\newcommand{\pdiff}[1]{\ensuremath{\frac{\partial}{\partial #1}}}

\newcommand{\Pdiff}[2]{\ensuremath{\frac{\partial #1}{\partial #2}}}
\newcommand{\Pdifftwo}[2]{\ensuremath{\frac{\partial^2 #1}{\partial {#2}^2}}}
\newcommand{\Pdifftwov}[3]{\ensuremath{\frac{\partial^2 #1}{\partial {#2}\partial{#3}}}}

\newcommand{\idiff}[2]{\ensuremath{\partial #1/\partial #2}}

\newcommand{\cdiff}[1]{\ensuremath{\frac{D}{\partial #1}}}

\newcommand{\vbar}{\ensuremath{\biggr\vert}}
\newcommand{\flow}{\ensuremath{\mathcal{F}}}

\newcommand{\T}[1]{\ensuremath{T_\partial #1}}
\newcommand{\uT}[1]{\ensuremath{T_1 #1}}
\newcommand{\genflow}{\ensuremath{\mathfrak F}}

\newcommand{\pr}{\ensuremath{\textrm{pr}}}
\newcommand{\grad}{\ensuremath{\textrm{grad }}}

\newcommand{\sT}{\ensuremath{\mathcal{T}}}
\newcommand{\TS}{\ensuremath{T^+\partial S}}
\newcommand{\Trap}[1]{\ensuremath{\textrm{Trap}(#1)}}

\newcommand{\mindist}{\ensuremath{d_{\min}}}
\newcommand{\maxdist}{\ensuremath{D}}
\newcommand{\maxsec}{\ensuremath{sec_{\max}}}
\newcommand{\mincurv}{\ensuremath{\kappa_{\min}}}
\newcommand{\minref}{\ensuremath{\xi}}
\newcommand{\globalcurv}{\ensuremath{\Theta}}
\newcommand{\frontmincurv}{\ensuremath{k_{\min}}}

\newcommand{\comment}[1]{}

\newtheorem{theorem}{Theorem}
\newtheorem{lemma}[theorem]{Lemma}
\newtheorem{proposition}[theorem]{Proposition}
\newtheorem{corollary}{Corollary}[theorem]

\theoremstyle{definition}

\crefname{condition}{condition}{conditions}


\date{July 29, 2023}

\begin{document}
\begin{frontmatter}
\title{Uniqueness of Obstacles in Riemannian Manifolds from Travelling Times}
\author{Tal Gurfinkel\corref{cor1}}\ead{tal.gurfinkel@research.uwa.edu.au}
\author{Lyle Noakes}\ead{lyle.noakes@uwa.edu.au}
\author{Luchezar Stoyanov}\ead{luchezar.stoyanov@uwa.edu.au}
\cortext[cor1]{Corresponding Author}
\address{Department of Mathematics and Statistics, University of Western Australia, Crawley 6009 WA, Australia}
\tnotetext[t1]{This research is supported by an Australian Government Research Training Program (RTP)
 Scholarship.}
\begin{abstract}
	Suppose that $K$ and $L$ are two disjoint unions of strictly convex obstacles with the same set of travelling times, contained in an $n$-dimensional Riemannian manifold $M$ (where $n\geq2$). Under some natural curvature conditions on $M$, and provided that no geodesic intersects more than two components in $K$ or $L$, we show that $K = L$.
\end{abstract}
\begin{keyword}
	Travelling Times, Inverse Scattering, Convex Obstacles, Riemannian Manifolds.
	\MSC[2020] 37D40, 37C83, 53C21
\end{keyword}
\end{frontmatter}
Boundary rigidity problems have been important in Riemannian geometry for over one hundred years, thanks to their practical applications in tomography and geophysics \cite{MR3952705}. Inverse problems similar to boundary rigidity problems, such as lens rigidity problems and marked length spectrum problems have also been studied \cite{MR3600043,MR3743701}. We also consider an inverse problem related to boundary rigidity, concerning the recovery of the boundary of many disjoint obstacles contained within an exterior body, from scattering data known only along the boundary of the exterior body. Classically boundary rigidity problems make use of data given by the geodesic flow, while our data will be determined by the billiard flow. Note that we aim to determine the boundary of several bodies while in boundary rigidity problems this information is already known and the objective is to recover the metric on the given manifold.

We suppose that $M$ is a complete $n$-dimensional Riemannian manifold ($n\geq 2$). We denote the metric on $M$ by $\langle\ \cdot\ \rangle$, and the sectional curvature of $M$ by $sec_M(X,Y)$ for any $X,Y\in TM$. We say that an $n$-dimensional submanifold $W$ of $M$ is strictly convex if the second fundamental form of $\partial W$ is positive definite with respect to the inward normal field of $\partial W$. We ought to remark that this notion of convexity is sometimes known as infinitesimal convexity. It is known that strict convexity implies local convexity \cite{MR350662}. Furthermore, under certain curvature conditions on $M$, it is known that if $H$ is a strictly convex hypersurface of $M$ then $H$ is the boundary of a convex body $\widetilde H$ in $M$. That is, $\widetilde H$ is convex in the sense that any two points in $\widetilde H$ are connected by a unique geodesic of $M$ which is contained in $\widetilde H$ (cf. \cite{MR448262,MR869706,MR962285}). Nonetheless for our purposes it is sufficient to only assume that strict convexity of the boundary holds. Let $S$ be a $n$-dimensional, strictly convex submanifold of $M$ with smooth boundary, such that between any two points $p,q\in S$ there is a unique minimal smooth geodesic of $S$ connecting $p$ and $q$. Note that this does not necessarily imply that for every $p,q\in S$ there is a unique minimal smooth geodesic of $M$ between $p$ and $q$ which is contained in $S$.
By an \emph{obstacle} we mean a union $K = K_1 \cup \dots\cup K_d$ of $d\geq 2$ disjoint $n$-dimensional, strictly convex submanifolds $K_i$ of $S$ with smooth boundary. Let $S_K = \overline{S\backslash K}$, we denote the (smooth) geodesic flow induced by $M$ as $\flow$ and the billiard flow induced by $S_K$ as $\genflow^K$. Denote the unit tangent bundle of $S_K$ by $\uT{S_K}$, and for any $\sigma\in\uT{S_K}$, let $\gamma_\sigma(t) = \pr_1\circ\genflow^K_t(\sigma)$ be the billiard ray generated by $\sigma$, where $\pr_1:TM\to M$ is the usual projection. If there are finite distinct times $\underline t \leq 0 \leq \overline t$ such that $\gamma_\sigma(\underline t),\gamma_\sigma(\overline t)\in\partial S$ we say that $\gamma_\sigma$ is a \emph{non-trapped} ray. Otherwise we say that it is \emph{trapped}, and denote the set of $\sigma\in\uT{S_K}$ such that $\gamma_\sigma$ is trapped by $\Trap{S_K}$. Denote the inward normal of $\partial S$ by $N_S$, and the set of inward pointing unit vectors on $\partial S$ by
\[
	\TS = \{\sigma\in \uT{S} : \pr_1(\sigma)\in\partial S \textrm{ and } \langle \sigma, N_S \rangle > 0\},
\]
Denote the set of trapped rays in $\TS$ by $\Trap{\partial S}^{(K)} = \Trap{S_K}\cap\TS$. For any non-trapped $\sigma\in\TS\backslash\Trap{\partial S}^{(K)}$, there is a time $t(\sigma)>0$ such that $y(\sigma) = \gamma_\sigma(t(\sigma))\in\partial S$. Let $x(\sigma) = \gamma_\sigma(0)$. The set of \emph{travelling times} of $K$ is defined as
\[
	\sT_K = \{(x(\sigma),y(\sigma),t(\sigma)):\sigma\in\TS\backslash\Trap{\partial S}^{(K)}\}.
\]
Suppose that $K$ and $L$ are two obstacles which have the same set of travelling times. That is $\sT_K = \sT_L$. When $M$ is $\reals^n$, it is known (see \cite{MR3359579}) that $K = L$. Furthermore, when $n=2$, and the conditions in \cref{condition:neg_curve} or \cref{condition:pos_curve} (see below) hold\footnote{Note that in this case $M$ does not have to be $\reals^2$, any Riemannian manifold satisfying the conditions will suffice. Setting $n=2$ means that $\minref = 1$ and $\varphi_0 = 0$ are sufficient.} then one can reconstruct $K$ directly from the set of travelling times (see \cite{math9192434,constructivealgorithm}), provided $K$ is in \emph{general position}, that is, no smooth geodesic in $S$ will intersect more than 2 distinct components of $K$.

Denote the minimum distance between any two distinct components in $K$ by $\mindist^K$ and in $L$ by $\mindist^L$. Set $\mindist = \min\{\mindist^K,\mindist^L\}$, and let $D$ be the diameter of $S$. Since $S$ is compact, there is a constant $\maxsec\in\reals$ such that $sec_M(X,Y) \leq \maxsec$ for all $X,Y\in\uT{S}$. Since $K$ and $L$ are disjoint unions of strictly convex submanifolds, there exists a lower bound $\mincurv > 0$ on the principal curvatures of $\partial K$ or $\partial L$. Let $\minref = \lceil{\frac{\maxdist}{\mindist}}\rceil + 2$. By \Cref{lemma:minimal_reflection_angle} there are angles $\varphi_0^K,\varphi_0^L\in(0,\pi/2)$ such that every ray in $S_K$ and $S_L$ respectively will hit $\partial K$ and $\partial L$ at an angle of at most $\varphi_0^K$ and $\varphi_0^L$ with respect to the outward normals of $\partial K$ and $\partial L$ at least once very $\minref$ reflections. We set $\varphi_0 = \min\{\varphi_0^K,\varphi_0^L\}$. In \Cref{proposition:convex_front_from_obstacle} we construct strictly convex fronts from the tangent rays of the obstacles. Such a front is shown to have a lower bound $\Theta_0>0$ on its minimum principal curvature. We note that $\varphi_0$ and $\Theta_0$ both depend only on $K$ and $L$.
We ask that $S$ satisfies one of the following conditions:
\begin{equation}\label{condition:neg_curve}
	\maxsec \leq 0\textrm{ or,}
\end{equation}
\begin{equation}\label{condition:pos_curve}
	\maxsec > 0,\textrm{ while }\maxdist\minref\sqrt{\maxsec}<\frac{\pi}{2}\textrm{ and }\tan(\maxsec\maxdist\minref)\sqrt{\maxsec}<\globalcurv.
\end{equation}
\[\globalcurv = \min\{2\mincurv\cos\varphi_0,\globalcurv_0\}.\]
 
Given any $\widetilde\sigma\in T_1S\backslash\Trap{S_K}$ which generates a trajectory $\gamma^{K}_{\widetilde\sigma}$ in $S_K$ that is tangential to $\partial K$ at some point. There is some $\sigma\in\TS$ such that $\gamma^K_\sigma$ and $\gamma^K_{\widetilde\sigma}$ are the same ray (up to re-parameterisation). Let $t_K^*>0$ be the minimum time at which $\gamma^{K}_\sigma$ is tangent to $K$. Since $\sT_K = \sT_L$ it follows that $\gamma^L_\sigma$ must be tangent to $\partial L$ as well. This holds owing to the fact that $\sigma$ is a singularity of the travelling time function $t_K:\TS\backslash\Trap{\partial S}^{(K)}\to\sT_K$. Since $t_L = t_K$, the point $\sigma$ must also be a singularity of $t_L$ and hence generate a tangent ray in $S_L$. Denote the minimum time at which $\gamma^{L}_\sigma$ is tangent to $\partial L$ by $t_L^*>0$. Set $t^* = \min\{t^*_K,t^*_L\}$.
We say that $K$ and $L$ are \emph{equivalent up to tangency} if for all such $\sigma\in T_1S$,
\[
	\gamma_\sigma^K(t) = \gamma_\sigma^L(t)\quad \textrm{for all}\quad 0\leq t \leq t^*.
\]

This allows us to state our main theorem:

\begin{theorem}\label{Theorem 1}
	Let $K$ and $L$ be two disjoint unions of strictly convex obstacles contained within the same strictly convex submanifold $S$ of $M$. Suppose that $K$ and $L$ have the same set of travelling times, and satisfy either \cref{condition:neg_curve} or \cref{condition:pos_curve}. If $K$ and $L$ are equivalent up to tangency, then $K = L$. Consequently if $K$ and $L$ are both in general position then $K$ = $L$.
\end{theorem}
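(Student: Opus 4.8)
The plan is to reduce the equality $K=L$ to the coincidence of the two boundaries together with their tangent hyperplanes, and then to recover that coincidence from the billiard data. Since every component of $K$ and of $L$ is strictly convex, each component equals the region enveloped by the geodesics tangent to its boundary; it therefore suffices to prove that a geodesic is tangent to $\partial K$ if and only if it is tangent to $\partial L$, and at the same point. Once the boundaries coincide with matching tangent planes (hence matching inward normals, since each obstacle is the region its grazing rays avoid), the two obstacles are equal.

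First I would turn the hypothesis into boundary information. Fix $\sigma\in\TS$ generating a ray tangent to $\partial K$. As noted before the theorem, tangency of $\gamma^K_\sigma$ is equivalent to $\sigma$ being a singularity of the travelling-time function, and since $t_K=t_L$ the ray $\gamma^L_\sigma$ is tangent to $\partial L$ as well; equivalence up to tangency then gives $\gamma^K_\sigma=\gamma^L_\sigma$ on $[0,t^*]$, where $t^*=\min\{t^*_K,t^*_L\}$. Along this shared segment every transversal reflection is common to both trajectories, so at each reflection point the reflection law forces $\partial K$ and $\partial L$ to pass through the same point with the same tangent hyperplane, and at time $t^*$ the common trajectory grazes one of the obstacles. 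To see that the first tangencies genuinely coincide I would rule out $t^*_K<t^*_L$: in that case the shared geodesic would graze $\partial K$ at $p=\gamma_\sigma(t^*_K)$ while running through the interior of $S_L$, and a comparison of the one-sided singularities of $t_K$ and $t_L$ for directions near $\sigma$ --- on one side the nearby rays graze and continue, on the other they strike the obstacle transversally --- would contradict $t_K=t_L$. Hence $t^*_K=t^*_L$ and the common first-tangency point lies on $\partial K\cap\partial L$ with a common tangent plane.

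The remaining, and I expect hardest, task is to propagate this agreement until the reflection and tangency points sweep out all of $\partial K$ and $\partial L$. Outermost boundary points are grazed by rays reaching $\partial S$ without intervening contact, but points shielded behind other components can be reached only after one or more reflections, and for these I would argue inductively on the number of reflections. The engine of the induction is \Cref{proposition:convex_front_from_obstacle}: from a family of tangent rays one constructs a strictly convex front whose principal curvatures are bounded below by $\globalcurv_0$. \cref{condition:neg_curve} and \cref{condition:pos_curve} are exactly what is needed to keep such a front strictly convex as it is transported and reflected --- the upper bound $\maxsec$ together with the length scale $\maxdist\minref$ prevents focal points from forming along trajectory blocks of $\minref$ reflections, while \Cref{lemma:minimal_reflection_angle} guarantees a near-normal reflection at least once in every such block, so the fronts cannot degenerate. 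Because a strictly convex front meets a strictly convex obstacle in a single, continuously varying first-contact point, agreement of the flows up to the $k$-th reflection upgrades to agreement up to the $(k+1)$-th, and the new contact points again land on $\partial K\cap\partial L$ with matching tangent planes. Iterating exhausts the boundary, giving $\partial K=\partial L$ and hence $K=L$.

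Finally, the consequence for obstacles in general position follows once general position is shown to imply equivalence up to tangency. When no geodesic of $S$ meets more than two components, I would check that each tangent ray realises its first tangency through an initial segment interacting with at most the single grazed component, so that this segment is pinned down by $\sigma$ together with the travelling-time datum; matching $t_K=t_L$ then forces $\gamma^L_\sigma$ to follow the same path up to $t^*$, which is equivalence up to tangency, and the first part applies. The principal obstacle throughout is the propagation step above: it is there that the curvature hypotheses must be invoked to exclude focal points and preserve convexity of the reflected fronts, without which the coincidence of the boundaries could not be carried into the hidden parts of the obstacles.
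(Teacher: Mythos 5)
There is a genuine gap at the step you yourself flag as the crux: ruling out $t^*_K<t^*_L$. Your proposed ``comparison of one-sided singularities'' does not work as stated, because the dangerous configuration is precisely one in which the entire local family of rays grazing $\partial K$ near $p=\gamma_\sigma(t^*_K)$ is simultaneously grazing $\partial L$ near $q=\gamma_\sigma(t^*_L)$. In that situation the singular sets of $t_K$ and $t_L$ in $\TS$ coincide near $\sigma$, and on each side of the shadow boundary the qualitative behaviour (graze-and-continue versus transversal reflection) is the same for both obstacles, so no contradiction with $t_K=t_L$ is visible from one-sided singularity data alone. What actually excludes this configuration in the paper is a dimension count: one builds two strictly convex fronts via \Cref{proposition:convex_front_from_obstacle} --- one from the tangent rays to a patch $U\subseteq\partial K\setminus\partial L$, one from a patch of $\partial L$ around an accumulation point $z^*$ of the second tangency points --- and invokes \Cref{proposition:convex_front_collision} to conclude that two such fronts can share at most a $0$-dimensional family of common orthogonal geodesics, whereas the assumed configuration produces a positive-dimensional family. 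Making that count work also requires discarding the rays that are trapped or tangent more than once, which is the content of \Cref{proposition:tangent_twice} and \Cref{proposition:trapped_rays_dimension_zero}; your proposal never addresses these exceptional rays, and without controlling their dimension the family of ``good'' tangent rays could a priori be too small to run any such argument.

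Two further points. First, your inductive propagation on the number of reflections is not needed and, as formulated, is not supported by the hypotheses: equivalence up to tangency only pins the two trajectories together up to the \emph{first} tangency time $t^*$, so you cannot ``upgrade agreement up to the $k$-th reflection to the $(k+1)$-th'' past a grazing point --- the paper instead runs a single global contradiction argument starting from an arbitrary point $x_0\in\partial K\setminus\partial L$, with no induction. Second, for the final claim your sketch that general position forces equivalence up to tangency again quietly assumes the hard part: one must show the tangency occurs at the \emph{first} contact point for both $\gamma^K_\sigma$ and $\gamma^L_\sigma$ simultaneously, and the paper's \Cref{lemma:gen_pos_equiv} does this by appealing once more to the same convex-front collision argument. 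In short, the architecture you propose could be made to work only after importing the front-propagation and transversality machinery of \Cref{sect:propagation}, which is the actual mathematical content of the proof and is absent from your write-up.
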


Denote by $\mathcal{K}_T$ the class of pairs $(K,L)$ of obstacles  with the same travelling times that are equivalent up to tangency. We remark that this class is nonempty, in fact if a pair of obstacles $(K,L)$ both satisfy Ikawa's no-eclipse condition (see \cite{MR949013}) and have the same set of travelling times then they are equivalent up to tangency. i.e. $(K,L)\in\mathcal{K}_T$. To show that this is in fact true, recall Ikawa's no-eclipse condition is as follows: for any three distinct components $K_i,K_j,K_l\subseteq K$, if $\textrm{Hull}(K_i\cup K_j)$ is the convex hull of $K_i\cup K_j$ then $\textrm{Hull}(K_i\cup K_j)\cap K_l = \emptyset$. We note that this is equivalent to saying that $K$ is in general position. \Cref{lemma:gen_pos_equiv} shows that if $K$ and $L$ are both in general position, respectively, then they must be equivalent up to tangency.

\section{The Propagation of Convex Fronts}\label{sect:propagation}

Let $X$ be a strictly convex codimension 1 submanifold of $M$. Let $N$ be a unit normal field to $X$, then the second fundamental form of $X$ is \[h(Y,Z) = \langle\nabla_{Y}Z, N\rangle = -\langle \nabla_Y N, Z\rangle,\]
for any vectors $Y,Z$ tangent to $X$. Then $h$ also defines the \emph{shape operator} of $X$, a linear self-adjoint map $s$, where $\langle -sY, Z \rangle = h(Y,Z)$. Also note that $sY = \nabla_Y N$.
Now define the submanifold $X_t$ as 
\[X_t = \{\gamma_{(x,N(x))}(t): x\in X\}.\]
Here $\gamma_{(x,N(x))}$ is the geodesic in $M$ starting from $x\in X$ in the direction of the outward normal $N(x)$ to $X$. We may consider $h_t$ along each geodesic normal to $X$, to obtain a relation between the second fundamental forms $h_0$ and $h_t$ of $X$ and $X_t$ respectively. When we do so, we take the parallel translates of vectors $Y,Z$ from $X$ to $X_t$ along the geodesic. The purpose of this section is to describe, as directly as possible, the evolution of the curvature of $X_t$ as it is propagated forward via the billiard flow. We do this either in terms of the operator $s$, or the principal curvatures of $X_t$.
\begin{proposition}
	Suppose that $\gamma_{x,N(x)}$ does not undergo any reflections for all $x\in X$, and all $t_0<t<t_1$.
	Then for all $t_0<t<t_1$, the shape operator $s$ of $X_t$ satisfies the following differential equation,
	\begin{equation}
		\dot s Y = R(N,Y)N - s^2 Y,
	\end{equation}
	for all $Y$ tangent to $X$. Moreover, if $k$ is a principal curvature of $X_t$ then it satisfies the following differential equation:
	\begin{equation}\label{eq:principal_curvature_diff}
		\dot k = -sec_M(N,V) - k^2,
	\end{equation}
	where $V$ is the principal eigenvector corresponding to $k$.
\end{proposition}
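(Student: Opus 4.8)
The plan is to recognise the asserted evolution equation for $s$ as a matrix Riccati equation attached to the normal geodesic congruence, and then to obtain the scalar equation by testing it against a unit principal eigenfield. First I would record the geometric input: because each $\gamma_{x,N(x)}$ leaves $X$ orthogonally and no reflections occur for $t_0<t<t_1$, the velocity field $N=\dot\gamma$ is exactly the outward unit normal field of $X_t$ on this interval. This is the standard fact that a geodesic congruence issuing orthogonally from a hypersurface stays orthogonal to its level sets. In particular $\nabla_N N=0$, and the shape operator of $X_t$ is $sY=\nabla_Y N$ with $N$ now serving as the normal to $X_t$.

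For the operator equation I would extend a tangent vector to a field $Y$ that is parallel along the congruence, $\nabla_N Y=0$, and remains tangent to each $X_t$; this is consistent because parallel transport preserves the metric and, since $N$ is itself parallel along $\gamma$, it preserves orthogonality to $N$, so $\langle Y,N\rangle$ stays $0$. With this extension $\dot s\,Y=\nabla_N(sY)$, and torsion-freeness gives $[N,Y]=\nabla_N Y-\nabla_Y N=-sY$. Expanding the curvature operator,
\[
	R(N,Y)N=\nabla_N\nabla_Y N-\nabla_Y\nabla_N N-\nabla_{[N,Y]}N=\nabla_N(sY)-0+s(sY),
\]
where I used $\nabla_N N=0$ and $\nabla_{[N,Y]}N=\nabla_{-sY}N=-s^2Y$. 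Rearranging yields precisely $\dot s\,Y=R(N,Y)N-s^2Y$. One may equivalently run this through a Jacobi field $J$ generated by varying the foot point of $\gamma$ in $X$: then $sJ=J'$, and the Jacobi equation $J''=-R(J,N)N$ gives the same identity. The no-reflection, no-focal-point hypothesis on $(t_0,t_1)$ is what guarantees $X_t$ is a genuine smooth hypersurface, so such fields span each tangent space and the pointwise identity holds for all $Y$ tangent to $X_t$.

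For the scalar equation I would take a smooth unit eigenfield $V(t)$ with $sV=kV$, so that $k=\langle sV,V\rangle$. Differentiating along $N$ and using that $s$ is self-adjoint, the cross terms drop out: $\langle s\dot V,V\rangle=\langle\dot V,kV\rangle=k\langle\dot V,V\rangle=0$ and likewise $\langle sV,\dot V\rangle=0$, since $\langle V,V\rangle=1$ forces $\dot V\perp V$. Hence $\dot k=\langle\dot s\,V,V\rangle$. Substituting the operator equation, I would use $\langle s^2V,V\rangle=\norm{sV}^2=k^2$, while the curvature term is $\langle R(N,V)N,V\rangle=-\langle R(N,V)V,N\rangle=-sec_M(N,V)$ because $N,V$ are orthonormal and $R$ is antisymmetric in its last two arguments. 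This gives $\dot k=-sec_M(N,V)-k^2$.

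The computation is largely routine; the part demanding care is the bookkeeping for $\dot s$ as a covariant derivative of an operator, together with the commutator identity $[N,Y]=-sY$ and the matching of sign conventions so that $\langle R(N,V)N,V\rangle$ becomes $-sec_M(N,V)$. I would therefore state the curvature convention explicitly at the outset and make clear that the identification of $N$ with the normal of $X_t$ (hence $\nabla_N N=0$) is the geometric fact driving the whole derivation.
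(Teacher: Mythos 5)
Your proposal is correct and follows essentially the same route as the paper: expand $\nabla_N\nabla_Y N$ via the curvature tensor, use $\nabla_N N=0$ and $[N,Y]=-sY$ (with $Y$ parallel along the congruence, as the paper also stipulates) to get the Riccati equation, then pair with a unit eigenfield to obtain the scalar equation. Your explicit justification that $\dot k=\langle\dot s\,V,V\rangle$ (via self-adjointness of $s$ and $\dot V\perp V$) is a small point of added care over the paper's computation, but the argument is the same.
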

\begin{proof}
	We compute the derivative:
\begin{align*}
	\dot s Y &= \nabla_N\nabla_Y N \\
	&= R(N,Y)N + \nabla_Y\nabla_N N + \nabla_{[N,Y]}N.
\end{align*}
Consider the term $[N,Y]$, by the symmetry of the connection we have:
\begin{align*}
	[N,Y] = \nabla_N Y - \nabla_Y N = -\nabla_Y N = -sY.
\end{align*}
Now since $\nabla_N N = 0$ we have,
\begin{align*}
	\dot s Y &= R(N,Y)N - \nabla_{sY} N\\
	&= R(N,Y)N - s^2 Y.
\end{align*}
Note that the eigenvalues of $s$ are the principal curvatures, so given an eigenvector $V$ of $s$ such that $sV=kV$ and $\langle V,V\rangle = 1$, we have $k = \langle sV, V\rangle$. Thus,
\begin{align*}
	\dot k &= \langle \dot s V, V\rangle\\
	&= \langle R(N,V)N - s^2 V, V\rangle\\
	&= -sec_M(N,V) - k^2.
\end{align*}
Recall that by $sec_M(N,V)$, we denote the sectional curvature of $M$ with respect to $N$ and $V$.
\end{proof}

\begin{corollary}\label{lemma:always_convex}
	Suppose that $\globalcurv$ is a lower bound for the principal curvatures of $X$. Then there exists a global lower bound $\frontmincurv>0$ for the principal curvatures of $X_t$ provided that $t < \maxdist\minref$.
\end{corollary}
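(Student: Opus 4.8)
The plan is to read \eqref{eq:principal_curvature_diff} as a scalar Riccati differential inequality and compare it against an explicitly solvable model equation. Since $S$ is compact we have $sec_M(N,V)\leq\maxsec$ for every principal eigenvector $V$, so along each geodesic normal to $X$ the smallest principal curvature of $X_t$, call it $k$, satisfies
\[
	\dot k = -sec_M(N,V) - k^2 \geq -\maxsec - k^2.
\]
Let $u$ solve the model equation $\dot u = -\maxsec - u^2$ with $u(0) = \globalcurv \leq k(0)$, where $\globalcurv$ is the assumed lower bound for the principal curvatures of $X = X_0$. A standard comparison (monotonicity) argument then gives $k(t)\geq u(t)$ for as long as $u$ stays finite: $k$ is a supersolution of the model equation with larger initial value. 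I would apply this to the minimum principal curvature $k_{\min}(t)=\min_i k_i(t)$, which is continuous and satisfies the inequality in the lower Dini derivative sense at the finitely many times where the minimising eigenvalue changes. It then suffices to bound $u$ below by a positive constant on $0\leq t < \maxdist\minref$.

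Next I would solve the model equation explicitly in the two regimes. Under \cref{condition:neg_curve}, where $\maxsec\leq 0$, we have $-\maxsec\geq0$, and the equation $\dot u = -\maxsec - u^2$ has a nonnegative equilibrium at $u=\sqrt{-\maxsec}$. Starting from $u(0)=\globalcurv>0$ the solution is monotone and never drops below $\min\{\globalcurv,\sqrt{-\maxsec}\}$ when $\maxsec<0$; for $\maxsec=0$ one has the explicit positive decreasing solution $u(t)=\globalcurv/(1+\globalcurv t)$. In either case $u$ is bounded below by a positive constant on the finite interval, and I would take $\frontmincurv = u(\maxdist\minref)>0$.

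Under \cref{condition:pos_curve}, where $\maxsec>0$, writing $b=\sqrt{\maxsec}$ the solution is
\[
	u(t) = b\tan\!\left(\arctan(\globalcurv/b) - b\,t\right),
\]
which is strictly decreasing and stays positive exactly while the argument of the tangent lies in $(0,\pi/2)$, that is, for $t < b^{-1}\arctan(\globalcurv/b)$. This is where the two parts of \cref{condition:pos_curve} enter: the bound $\maxdist\minref\sqrt{\maxsec}<\pi/2$ keeps the model solution on the first branch of the tangent throughout $[0,\maxdist\minref)$, and the bound on $\globalcurv$ is what forces $\maxdist\minref$ to lie below the first zero of $u$, so that $u$ does not reach $0$ before time $\maxdist\minref$. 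Again I would set $\frontmincurv = u(\maxdist\minref)>0$, which by monotonicity is the infimum of $u$ over the interval.

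The main obstacle is precisely the positive-curvature case: there the model solution genuinely decreases toward zero (and then to $-\infty$, reflecting the formation of a focal point of the front), so the corollary is quantitative and stands or falls on checking that \cref{condition:pos_curve} rules out this focusing before time $\maxdist\minref$. The only other point requiring care is the comparison step itself, namely passing the differential inequality correctly to the minimum principal curvature across eigenvalue crossings; this is routine once phrased via one-sided derivatives. The resulting bound $\frontmincurv$ depends only on $\maxsec$, $\globalcurv$, $\maxdist$ and $\minref$, hence is global and uniform over $x\in X$, as required.
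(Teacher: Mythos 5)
Your proposal is correct and follows essentially the same route as the paper, which simply cites Theorem 3.1 of Vetier and notes that the principal-curvature equation \eqref{eq:principal_curvature_diff} is bounded below by the solution of $\dot k = -\maxsec - k^2$, $k(0)=\globalcurv$, with \cref{condition:neg_curve} and \cref{condition:pos_curve} guaranteeing positivity up to time $\maxdist\minref$. You supply the explicit comparison and case analysis that the paper delegates to the reference; your computation in the positive-curvature case also confirms that the natural form of the second inequality in \cref{condition:pos_curve} is $\sqrt{\maxsec}\tan\bigl(\sqrt{\maxsec}\,\maxdist\minref\bigr)<\globalcurv$.
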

For a proof, see Theorem 3.1 in \cite{MR807598}. This involves solving a differential equation:
	\[
		\dot k = -\maxsec - k^2,\quad k(0) = \globalcurv,
	\]
which bounds \cref{eq:principal_curvature_diff} below, for all time $0<t<\maxdist\minref$. Note that for \Cref{lemma:always_convex} to be true, the proof depends directly on the bounds given in \cref{condition:neg_curve} and \cref{condition:pos_curve}.

\begin{proposition}\label{proposition:reflection_sff}
	Suppose that $\gamma_{x,N(x)}$ reflects off an obstacle $K$ at $t_r$. Let $s^-$ and $s^+$ be the shape operators of $X$ before and after reflection respectively, at the point of reflection $\gamma_{x,N(x)}(t_r)\in K$. Let $s_K$ be the shape operator of $\partial K$. Then shape operators satisfy the following equation,
	\begin{equation}\label{eq:reflection_sff}
		s^+(Y_+) - s^-(Y_-) = -2\langle N_-, N_K\rangle s_K(Y),
	\end{equation}
	for all $Y$ tangent to $K$. Here we denote by $Y_- = Y - \langle Y, N_- \rangle N_-$ and similarly for $Y_+$, where $N_-$ is the normal to $X$ prior to reflection and $N_K$ is the normal to $K$.
\end{proposition}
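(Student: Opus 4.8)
The plan is to reduce the whole statement to a single covariant differentiation of the billiard reflection law along the obstacle. The starting point is the elementary observation that the unit normals of the incident and reflected fronts are related, at each point of $\partial K$ near the reflection point $\gamma_{x,N(x)}(t_r)$, by the law of reflection:
\[
	N_+ = N_- - 2\langle N_-, N_K\rangle N_K.
\]
This is precisely the statement that reflection across the tangent hyperplane $T\partial K$ reverses the component of the propagation direction along $N_K$ while fixing the tangential part. It holds pointwise along $\partial K$ in a neighbourhood of the reflection point, since the incident geodesics of the front sweep out a neighbourhood of that point in $\partial K$, so $N_-$ (and hence $N_+$) is a well-defined field there.

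The technical heart is a lemma identifying the derivatives $\nabla_Y N_\pm$ with the front shape operators. I would extend each of $N_-$ and $N_+$ to a neighbourhood of the reflection point as the velocity field of the corresponding (incident, respectively reflected) geodesic congruence, so that its integral curves are geodesics and hence $\nabla_{N_\pm} N_\pm = 0$, exactly as in the first proposition of this section. With this extension the fronts $X^\pm$ are the orthogonal level hypersurfaces, and $s^\pm Z = \nabla_Z N_\pm$ for every $Z$ tangent to them. Then, for any $Y$ tangent to $\partial K$, I would decompose $Y = Y_\pm + \langle Y, N_\pm\rangle N_\pm$ and compute $\nabla_Y N_\pm = \nabla_{Y_\pm} N_\pm + \langle Y, N_\pm\rangle \nabla_{N_\pm} N_\pm = s^\pm(Y_\pm)$, the second term vanishing by the geodesic condition. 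This is the bridge that lets a derivative taken in an \emph{obstacle} direction be read as the \emph{front} shape operator applied to the projected vector.

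With these two ingredients the remainder is a short computation. Differentiating the reflection law covariantly along $Y \in T\partial K$ and applying the product rule gives $\nabla_Y N_+ = \nabla_Y N_- - 2\langle N_-, N_K\rangle \nabla_Y N_K - 2\bigl(Y\langle N_-, N_K\rangle\bigr) N_K$. Substituting $\nabla_Y N_K = s_K(Y)$ together with $\nabla_Y N_+ = s^+(Y_+)$ and $\nabla_Y N_- = s^-(Y_-)$ from the lemma, and collecting the terms tangent to $\partial K$, yields $s^+(Y_+) - s^-(Y_-) = -2\langle N_-, N_K\rangle s_K(Y)$, which is the claimed identity.

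The main obstacle I anticipate is the bookkeeping of the component along $N_K$: differentiating the scalar $\langle N_-, N_K\rangle$ produces a term proportional to $N_K$, and one must verify that this contributes only to the normal-to-$\partial K$ direction and does not disturb the tangential content that the statement records. A quick two-dimensional check is a reassuring sanity test, since there the tangential part of the identity reduces exactly to the classical mirror equation $k_+ - k_- = 2 k_K / \cos\varphi$ relating the front curvatures to the obstacle curvature and the angle of incidence $\varphi$. A secondary point requiring care is the legitimacy of the geodesic extension of $N_-$ and $N_+$ near the reflection point and the self-adjointness of $s^\pm$; both follow from the fronts being the orthogonal hypersurfaces of a geodesic congruence, so that $N_\pm^{\perp}$ is an integrable distribution with symmetric shape operator.
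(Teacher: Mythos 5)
Your proposal is correct and follows essentially the same route as the paper: differentiate the pointwise reflection law $N_+ - N_- = -2\langle N_-,N_K\rangle N_K$ covariantly along $\partial K$, discard the term proportional to $N_K$ as normal to the obstacle, and identify $\nabla_Y N_\pm$ with $s^\pm(Y_\pm)$. The only difference is presentational: you spell out the bridge $\nabla_Y N_\pm = \nabla_{Y_\pm} N_\pm = s^\pm(Y_\pm)$ via the geodesic extension $\nabla_{N_\pm}N_\pm = 0$, a step the paper performs implicitly when passing from $\langle\nabla_Y N_\pm, Z\rangle$ to $\langle\nabla_{Y_\pm} N_\pm, Z_\pm\rangle$.
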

\begin{proof}
	We note that along $K$, we can define the vector field $N_-$ of outward normals to $X$ prior to reflection. We then define $N_+$ as the vector field along $K$ such that \[N_+ - N_- = -2\langle N_-, N_K\rangle N_K.\]
	Now we may take covariant derivatives along $K$, in any tangent direction $Y$,
	\[
		\nabla_Y N_+ - \nabla_Y N_- = -2Y\langle N_-, N_K\rangle N_K - 2\langle N_-, N_K\rangle\nabla_Y N_K.
	\]
	Then taking inner products with respect to any tangent vector $Z$ to $K$, we have
	\[
		\langle\nabla_Y N_+, Z\rangle - \langle\nabla_Y N_-, Z\rangle = - 2\langle N_-, N_K\rangle\langle\nabla_Y N_K, Z\rangle.
	\]
	Denote the projections of $Y$ on to the tangent space of $X$ before and after reflection by $Y_-$ and $Y_+$ respectively. i.e.
	\[
		Y_\pm = Y - \langle Y, N_\pm \rangle N_\pm.
	\] 
	We similarly write $Z_\pm$ for the projections of $Z$. Thus we have
	\begin{equation}\label{eq:reflection_sff_bilinear}
		\langle\nabla_{Y_+} N_+, Z_+\rangle - \langle\nabla_{Y_-} N_-, Z_-\rangle = - 2\langle N_-, N_K\rangle\langle\nabla_Y N_K, Z\rangle.
	\end{equation}
	Which immediately implies \cref{eq:reflection_sff}.
\end{proof}
\begin{corollary}\label{lemma:reflection_curvature}
	If $\frontmincurv$ is the minimum principal curvature of $X$ prior to reflection on $\partial K$ at time $t_r$, and $k_+$ is any principal curvature after reflection, then
	\begin{equation}
		k_+ \geq \frontmincurv + 2\mincurv\cos\varphi,
	\end{equation}
	where $\varphi\in (0,\frac{\pi}{2})$ is the angle between the outward normal to $\partial K$ and the normal to $X$ after reflection, and $\mincurv$ is the minimal principal curvature of $\partial K$.
\end{corollary}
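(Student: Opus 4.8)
The plan is to read off the curvature estimate directly from the quadratic-form version of \cref{eq:reflection_sff}, exploiting the fact that principal curvatures are eigenvalues of the self-adjoint shape operators and are therefore controlled by the associated quadratic forms. First I would specialise \cref{eq:reflection_sff_bilinear} by setting $Z = Y$ and using $\langle\nabla_Y N, W\rangle = \langle sY, W\rangle$, giving the scalar identity
\[
\langle s^+ Y_+, Y_+\rangle = \langle s^- Y_-, Y_-\rangle - 2\langle N_-, N_K\rangle\langle s_K Y, Y\rangle
\]
for every $Y$ tangent to $\partial K$. The reflection law $N_+ = N_- - 2\langle N_-, N_K\rangle N_K$ yields $\langle N_+, N_K\rangle = -\langle N_-, N_K\rangle$, and since the front is incoming we have $\langle N_-, N_K\rangle < 0$ (with $N_K$ the outward normal); hence $-2\langle N_-, N_K\rangle = 2\cos\varphi$, where $\varphi\in(0,\pi/2)$ is exactly the stated reflection angle. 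The identity thus becomes $\langle s^+ Y_+, Y_+\rangle = \langle s^- Y_-, Y_-\rangle + 2\cos\varphi\,\langle s_K Y, Y\rangle$.

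Next, given a unit eigenvector $V_+$ of $s^+$ with eigenvalue $k_+$, I would produce a tangent vector $Y$ to $\partial K$ whose projection satisfies $Y_+ = V_+$. This is legitimate because the orthogonal projection onto $T X_+ = N_+^\perp$, restricted to $T\partial K$, is a linear isomorphism: any vector in its kernel lies in $T\partial K$ and is parallel to $N_+$, but $\langle N_+, N_K\rangle = \cos\varphi \neq 0$ forces it to vanish, and both spaces have dimension $n-1$. With this choice $k_+ = \langle s^+ V_+, V_+\rangle = \langle s^+ Y_+, Y_+\rangle$, so the scalar identity reads $k_+ = \langle s^- Y_-, Y_-\rangle + 2\cos\varphi\,\langle s_K Y, Y\rangle$.

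The hard part will be controlling the three distinct norms $|Y|$, $|Y_-|$, $|Y_+|$ that surface once I apply the eigenvalue bounds $\langle s^- Y_-, Y_-\rangle \geq \frontmincurv|Y_-|^2$ and $\langle s_K Y, Y\rangle \geq \mincurv|Y|^2$ (the latter from strict convexity of $\partial K$). The key observation I intend to use is that the reflection $\rho(W) = W - 2\langle W, N_K\rangle N_K$ across $T\partial K$ is an isometry fixing $\partial K$ pointwise and sending $N_-$ to $N_+$; therefore, for $Y\in T\partial K$, $\langle Y, N_+\rangle = \langle \rho Y, N_-\rangle = \langle Y, N_-\rangle$, which forces $|Y_-| = |Y_+|$. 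Since $Y_+ = V_+$ is a unit vector, this gives $|Y_-|^2 = |Y_+|^2 = 1$ and $|Y|^2 = 1 + \langle Y, N_+\rangle^2 \geq 1$.

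Finally, assembling these estimates, $\langle s^- Y_-, Y_-\rangle \geq \frontmincurv$ and $2\cos\varphi\,\langle s_K Y, Y\rangle \geq 2\mincurv\cos\varphi\,|Y|^2 \geq 2\mincurv\cos\varphi$, whence $k_+ \geq \frontmincurv + 2\mincurv\cos\varphi$, as claimed. I expect the isometry argument establishing $|Y_-| = |Y_+|$ to be the crux: it is precisely what allows the incoming front's curvature bound $\frontmincurv$ to pass through the projection undiminished, rather than being eroded by a factor depending on the reflection angle.
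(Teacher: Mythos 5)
Your proof is correct and takes essentially the same route as the paper's: both specialise \cref{eq:reflection_sff_bilinear} with $Z=Y$, choose $Y\in T\partial K$ with $Y_+=V_+$, use the reflection isometry across $T_{}\partial K$ to conclude $\norm{Y_-}=\norm{Y_+}=1$, and finish with $\norm{Y}^2 = 1+\langle Y,N_+\rangle^2\geq 1$. The only cosmetic difference is that the paper exhibits $Y$ by an explicit formula where you argue abstractly that the projection restricted to $T\partial K$ is an isomorphism, and you are in fact somewhat more explicit than the paper about the sign of $\langle N_-,N_K\rangle$ and the identification $-2\langle N_-,N_K\rangle = 2\cos\varphi$.
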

\begin{proof}
	Suppose $V_+$ is a principal direction with $\norm{V_+} = 1$, and let $k_+$ be the corresponding principal curvature,. We may pick $Y$ such that $Y_+ = V_+$, using the notation for $Y_+$ from the proof of \cref{proposition:reflection_sff}. That is, we set
	\[ 
		Y = V_+ - \frac{\langle N_K,V_+ \rangle}{\langle N_K,N_+ \rangle}N_+.
	\]
	Note that $\norm{Y_-}=1$ by construction since $Y_- = Y_+ - 2\langle Y_+, N_K\rangle N_K$. Although $Y_-$ might not be a principal direction of $X$ prior to reflection. 
	By setting $Z_\pm = Y_\pm$ it now follows from \cref{eq:reflection_sff_bilinear} that,
	\[
		k_+ \geq \frontmincurv + 2\mincurv\cos\varphi \norm{Y}^{2}.
	\]
	Then we note that
	\[
		\norm{Y}^2 = 1+\frac{\langle V_+, N_K \rangle^2}{\langle N_+,N_K \rangle^2}.
	\]
	The result follows.
\end{proof}
\begin{lemma}\label{lemma:minimal_reflection_angle}
		There exist constants $\minref\in\mathbb{Z}^+$ and $\varphi_0\in (0,\frac{\pi}{2})$ such that any geodesic reflecting transversly on $\partial K$ at least $\minref$ times will hit $\partial K$ at an angle $\varphi < \varphi_0$ at least once, with respect to the outward normal.
\end{lemma}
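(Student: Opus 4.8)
The plan is to fix $\minref$ by the stated formula, so that $(\minref-1)\mindist > \maxdist$, and then to produce $\varphi_0$ by a compactness argument in which the curvature hypotheses rule out the degenerate limiting configuration. The first preliminary observation I would record is that consecutive reflection points of a billiard trajectory lie on \emph{distinct} components of $K$: after a transverse reflection off the strictly convex hypersurface $\partial K_i$, the outgoing geodesic enters the region on the outward side of the tangent plane to $\partial K_i$ at the reflection point, and strict convexity of $K_i$ prevents it from meeting $\partial K_i$ again before leaving a neighbourhood, so the next reflection occurs on some $K_j$ with $j\neq i$. Consequently the geodesic arc between two consecutive reflection points has length at least $\mindist$.

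Next I would set up the compactness argument by contradiction. Suppose no $\varphi_0\in(0,\pi/2)$ has the asserted property. Then for each $n$ there is a billiard trajectory in $S_K$ reflecting transversely at points $p_1^{(n)},\dots,p_{\minref}^{(n)}\in\partial K$, with every reflection angle to the outward normal at least $\pi/2-1/n$. I would encode each such trajectory by the tuple of its first $\minref$ reflection points together with its initial incoming direction; since $\overline S$ is compact, these live in the compact sets $\overline S^{\,\minref}$ and the unit sphere bundle of $\uT{S}$, so after passing to a subsequence everything converges. Because at each reflection the outgoing velocity is $v^+ = v^- - 2\langle v^-, N\rangle N$ and the normal component $\langle v^-, N\rangle\to 0$ as the angle tends to $\pi/2$, the reflections degenerate: in the limit $v^+=v^-$ at every reflection point, so the limiting trajectory $\gamma$ is a single smooth geodesic of $M$, tangent to $\partial K$ at the limiting points $p_1,\dots,p_{\minref}$. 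Since consecutive reflection points were at distance $\geq\mindist$, the same holds in the limit, so the $p_i$ remain genuinely distinct and the length of $\gamma$ between $p_1$ and $p_{\minref}$ is at least $(\minref-1)\mindist>\maxdist$.

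Finally I would derive the contradiction from comparison geometry. Under \cref{condition:neg_curve} or \cref{condition:pos_curve} the sectional curvature satisfies $sec_M\leq\maxsec$ with $\maxdist\,\minref\,\sqrt{\maxsec}<\pi/2$ (vacuously, with $\pi/\sqrt{\maxsec}=\infty$, when $\maxsec\leq 0$), so by the Rauch comparison theorem no geodesic acquires an interior conjugate point within length $\pi/\sqrt{\maxsec}>\maxdist\,\minref$. As $\gamma$ lies in $S$, each of its $\minref-1$ segments is then free of conjugate points and minimising, hence of length at most $\maxdist$, so $\gamma$ has total length at most $(\minref-1)\maxdist<\pi/\sqrt{\maxsec}$; being conjugate-point-free over its whole length it is itself minimising between $p_1$ and $p_{\minref}$. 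Therefore $d(p_1,p_{\minref})$ equals the length of $\gamma$, which exceeds $\maxdist$, contradicting $p_1,p_{\minref}\in S$ and $\mathrm{diam}(S)=\maxdist$. Hence some $\varphi_0$ works.

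The step I expect to be the main obstacle is the last one: converting the length of the long limiting geodesic into the distance between its endpoints. Absence of conjugate points yields only local minimality, so to conclude global minimality one must invoke the curvature bounds of \cref{condition:neg_curve}/\cref{condition:pos_curve} more carefully—e.g. that on the relevant scale $S$ is CAT($\maxsec$) and uniquely geodesic—precisely in the spirit of the differential-inequality argument behind \cref{lemma:always_convex}. Verifying that the degenerating reflections genuinely collapse to tangencies while keeping the $\minref$ limit points distinct is routine but should be carried out directly in terms of the converging points and directions, so as to avoid relying on continuity of the billiard flow across its singular set.
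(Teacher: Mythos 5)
Your proposal is correct and follows essentially the same route as the paper: a compactness/contradiction argument in which a sequence of trajectories with ever-more-tangential reflections degenerates to a smooth geodesic tangent to $\partial K$ at many points spaced at least $\mindist$ apart, whose length then exceeds what a smooth geodesic in $S$ can have. The paper's version lets the number of reflections tend to infinity (obtaining a smooth geodesic of infinite length) and simply asserts that the maximal length of a smooth geodesic in $S$ is $D$ --- precisely the step you flag as the main obstacle --- so your more quantitative handling of that last step via the curvature bounds is, if anything, more detailed than the original.
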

\begin{proof}
	Suppose the contrary. Then there exists a sequence $\{\sigma_i\}_{i=1}^\infty\subseteq \uT{S_K}$ such that the billiard ray $\gamma_{\sigma_i}$ generated by $\sigma_i$ has $i$ reflections in $S_K$, and at each point of reflection the angle between $\dot\gamma_{\sigma_i}$ and the outward normal to $\partial K$ is greater than $\pi/2 - 1/i$. Since $\uT{S_K}$ is compact it follows that there is a convergent subsequence $\{\sigma_{i_j}\}_{j=1}^\infty\to\sigma^*\in\uT{S_K}$. Then by construction the billiard ray $\gamma_{\sigma^*}$ has infinitely many points of reflection, all tangential to $\partial K$. That is, $\gamma_{\sigma^*}$ is a smooth geodesic of infinite length in $S_K$. This is a contradiction, since the maximal length of a smooth geodesic in $S$ is $D$.
\end{proof}
The following \namecref{proposition:convex_front_from_obstacle} allows one to encode the tangent ray to a small neighbourhood in $\partial K$ as a strictly convex front. The proof is rather long and technical, however the result is essential for several results which follow.
\begin{proposition}\label{proposition:convex_front_from_obstacle}
	Fix a point $x_0\in \partial K$ and tangent direction $V\in T_{x_0}\partial K$ such that $\norm{V} = 1$. There is a neighbourhood $U\subseteq \partial K$ of $x_0$ and a strictly convex front $Y$ such that for every $y\in Y$, the ray in the inward normal direction from $y$ will intersect $U$ tangentially. Hence, the front $Y$ is diffeomorphic to $U$. Furthermore, there is a global lower bound $\globalcurv_0>0$ such that the minimum principal curvature of $Y$ is greater than $\globalcurv_0$ for all $x_0\in \partial K$ and $V\in T_{x_0}\partial K$. 
\end{proposition}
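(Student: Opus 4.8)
The plan is to realise $Y$ as the front orthogonal to the family of geodesics that are tangent to $\partial K$ and whose feet of tangency fill out $U$, i.e.\ as a higher-dimensional analogue of an involute of $\partial K$, and then to read off its curvature from the behaviour of Jacobi fields along these tangent geodesics. First I would fix a unit direction field on $\partial K$ extending $V$: choose a hypersurface $\Sigma_0\subseteq\partial K$ meeting $x_0$ orthogonally to $V$ but placed a definite intrinsic distance $\psi_0>0$ ``behind'' $x_0$ in the $-V$ direction, and let $\psi$ be the signed intrinsic distance in $\partial K$ to $\Sigma_0$, so that $\norm{\grad\psi}=1$, $W:=\grad\psi$ is a unit field tangent to $\partial K$ with $W(x_0)=V$, and $\psi\geq\psi_0>0$ on a neighbourhood $U$ of $x_0$. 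For $x\in U$ write $\gamma_x(t)=\exp_x(tW(x))$ for the $M$-geodesic leaving $x$ tangentially to $\partial K$. The offset $\psi_0$ is the crucial device: keeping the feet of tangency a definite distance from the caustic locus $\{\psi=0\}$ is what makes $Y$ strictly and \emph{uniformly} convex rather than degenerate --- for a round obstacle the naive choice $\psi_0=0$ produces a front with a vanishing transverse principal curvature.

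Next I would show that this tangent congruence is normal and build $Y$. For a variation of $x$ in a direction $\zeta\in T_xU$ the associated Jacobi field $J$ along $\gamma_x$ has $J(0)=\zeta$ and $\tfrac{D}{dt}J(0)=\nabla_\zeta W$; since $\langle R(J,\dot\gamma)\dot\gamma,\dot\gamma\rangle=0$ and $\langle\nabla_\zeta W,W\rangle=\tfrac12\zeta\norm{W}^2=0$, the quantity $\langle J(t),\dot\gamma\rangle\equiv\langle\zeta,W\rangle$ is constant in $t$. Hence the $1$-form $\zeta\mapsto\langle\zeta,W\rangle=d\psi(\zeta)$ is exact, and taking $\tau:=\psi$ the surface $Y=F(U)$ with $F(x)=\exp_x(-\tau(x)W(x))$ meets every $\gamma_x$ orthogonally, with inward unit normal $N_Y=\dot\gamma_x(-\tau)$. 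By construction the inward normal ray from each point of $Y$ returns to $U$ and is tangent to $\partial K$ there, and the foot map $Y\to U$ is a diffeomorphism as long as $\tau$ stays below the first conjugate distance along the $\gamma_x$; here \cref{condition:neg_curve} and \cref{condition:pos_curve} guarantee there are no conjugate points for $\tau$ up to the relevant scale $\maxdist\minref$, exactly as in the comparison underlying \Cref{lemma:always_convex}.

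The heart of the proof, and the step I expect to be the main obstacle, is the curvature estimate. Differentiating $N_Y$ along $Y$ gives the second fundamental form $h_Y(dF\zeta,dF\eta)=-\langle\tfrac{D}{dt}J_\zeta(-\tau),\,J_\eta(-\tau)\rangle$, whose initial data couple to the shape operator of $\partial K$ through the normal part $\langle\nabla_\zeta W,N_K\rangle=h_K(\zeta,W)$, so the convexity of $\partial K$ (principal curvatures $\geq\mincurv$) enters $h_Y$ directly. I would then prove $h_Y$ positive definite by splitting $T_xU$ into the line $\reals W$ and its orthogonal complement: along $W$ one recovers the involute mechanism, a principal curvature comparable to $1/\tau$ which is automatically positive, while in the transverse directions positivity is precisely what fails at the caustic and must be restored by the offset $\psi_0>0$ together with the curvature bounds, which control the defocusing of $J_\zeta$ over the distance $\tau$ by comparison with the Riccati solution of \cref{eq:principal_curvature_diff} as in \Cref{lemma:always_convex}. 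The delicate point is this transverse estimate in the glancing regime: the parametrisation $F$ degenerates in the $W$-direction, and one must track the competing $O(1/\tau)$ and $O(\tau)$ contributions carefully in order to keep every principal curvature bounded below by a positive constant.

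Finally, I would extract uniformity. Every quantity entering the estimate --- the lower bound $\mincurv$ on the curvatures of $\partial K$, the ambient bound $\maxsec$, the admissible range of $\tau$, and the fixed offset $\psi_0$ and diameter of $U$ --- can be chosen independently of $x_0$ and $V$ because $\partial K$ is compact. Taking the infimum of the resulting principal-curvature lower bound over the compact set of all pairs $(x_0,V)$ then yields a single constant $\globalcurv_0>0$ valid for every $x_0\in\partial K$ and every $V\in T_{x_0}\partial K$, as claimed.
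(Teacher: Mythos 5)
Your overall strategy is the paper's: realise $Y$ as the orthogonal front of the congruence of ambient geodesics tangent to $\partial K$ along a unit gradient field $W=\grad\psi$, prove normality from the constancy of $\langle J,\dot\gamma\rangle$, and read the curvature off the Jacobi fields; your normality computation is correct. However there are two genuine gaps. The first is the choice of branch. Placing $Y$ at $F(x)=\exp_x(-\tau(x)W(x))$ puts the front \emph{upstream} of the tangency, so in the $W$-direction the congruence focuses on $\partial K$ ahead of $Y$: already in the flat model $dF(W)=-\tau h_K(W,W)N_K$ and the principal curvature in this direction is $-1/\tau$ with respect to the normal $+W$ pointing toward the obstacle (and $+1/\tau$ with respect to $-W$). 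The paper instead takes the downstream branch $y=x+(\varepsilon-u_1)\,\idiff{x}{u_1}$, past the caustic, which is divergent in the direction in which it is subsequently propagated; this is what \Cref{lemma:always_convex} and \Cref{proposition:distance_bound_on_front} require in every later application. Your front, pushed toward the obstacle as in \Cref{proposition:tangent_twice} and the proof of \Cref{Theorem 1}, is a \emph{converging} front that focuses exactly on $\partial K$, so it is not a strictly convex front in the sense the paper needs.

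The second gap is the transverse curvature bound, which you correctly identify as the heart of the matter but whose proposed mechanism does not work. For unit $\zeta\perp W$ tangent to $\partial K$ one finds (in the flat model, up to an overall sign fixed by the choice of normal)
\[
h_Y(dF\zeta,dF\zeta)\;=\;\mathrm{Hess}^{\partial K}\psi(\zeta,\zeta)\;-\;\tau\left(\norm{\nabla^{\partial K}_\zeta W}^2+h_K(\zeta,W)^2\right),
\]
so the term that must dominate is $\mathrm{Hess}^{\partial K}\psi$ on $W^\perp$, i.e.\ the second fundamental form of the level sets of $\psi$ \emph{inside} $\partial K$. With $\psi$ the distance to a hypersurface $\Sigma_0$, this term is only $O(\psi_0)$ and its sign is governed by the intrinsic curvature of $\partial K$ (which under \cref{condition:neg_curve} can have either sign); e.g.\ if $h_K(\zeta,W)=0$ and $\mathrm{Hess}^{\partial K}\psi(\zeta,\zeta)=A$ with $0<A<1/\tau$, the form is indefinite with respect to both normals. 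The offset $\psi_0$ does not restore positivity, and even when the transverse block happens to be positive it is too small to survive the mixed $W$--transverse entries of $h_Y$, which are of order $\mincurv\norm{\nabla_W W}$ and must be dominated for positive definiteness of a matrix whose diagonal blocks have sizes $1/\tau$ and $O(\psi_0)$. What actually makes the estimate work in the paper is taking $\psi$ to be the distance to a \emph{point} $x_0^*=\exp^{(\partial K)}_{x_0}(-\varepsilon V)$: the level sets are then geodesic spheres of radius about $\varepsilon$, whose second fundamental form $K_B$ in $\partial K$ tends to infinity as $\varepsilon\to0$, and this $1/\varepsilon$ term uniformly dominates every other contribution, yielding the global bound $\globalcurv_0$. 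Without this (or an equivalent strong-convexity choice for the level sets of $\psi$) the claimed uniform lower bound does not follow.
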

\begin{proof}
	Pick a small $\varepsilon>0$, and let $\exp_{x_0}^{(\partial K)}$ be the exponential map on $\partial K$ at $x_0$. Define $x_0^* = \exp_{x_0}^{(\partial K)}(-\varepsilon V)$. It is known \cite{MR618545} that the geodesic sphere \[\partial B_{\varepsilon}(x_0^*) = \{x\in\partial K:d^{(\partial K)}(x,x_0^*)=\varepsilon\},\] has a strictly positive second fundamental form, provided $\varepsilon$ is sufficiently small. In fact, given any $K_B>0$, there exists a $\varepsilon>0$ such that the minimum principal curvature of $\partial B_\varepsilon(x_0^*)$ is bounded below by $K_B$. Let $0<\delta<\varepsilon$ and denote the family of spheres given by $\partial B_{\varepsilon^*}(x_0^*)$, where $\varepsilon^* \in (\varepsilon-\delta ,\varepsilon+\delta)$. Note that $x_0\in \partial B_{\varepsilon}(x_0^*)$ by construction. Let $U^*\subseteq \partial B_{\varepsilon}(x_0^*)$ be a neighbourhood of $x_0$. Parameterise $U^*$ as $\widetilde x(u_2,\dots,u_{n-1})$ and let $\eta(u_2,\dots,u_{n-1})$ be the outward unit normal field to $\partial B_\varepsilon(x_0^*)$. Denote the domain of $\widetilde x$ by $U_0^*$. Now for each fixed $(u_2,\dots,u_{n-1})$ let $x(u_1,u_2,\dots,u_{n-1})$ be the unit-speed geodesic in $\partial K$ from $\widetilde x(u_2,\dots,u_{n-1})$ in the direction $\eta(u_2,\dots,u_{n-1})$. Then $x(u_1,u_2,\dots,u_n)\in \partial B_{(\varepsilon + u_1)}(x_0^*)$ for all $u_1 \in (-\delta,\delta)$. Therefore, provided $\delta>0$ is sufficiently small, $x:(-\delta,\delta)\times U_0^*\to U$ is a parameterisation of a neighbourhood $U\subseteq\partial K$ of $x_0$ such that $U^*\subset U$ and $\idiff{x}{u_1}(0) = V$.
	
	
	We define \[y_t(u) = \pr_1\circ \flow_t\left(x(u),\Pdiff{x}{u_1}(u)\right),\]
	where $\pr_1$ is the projection from $TM$ onto $M$. Then we claim that $ y(u) = y_{\varepsilon-u_1}(u)$ is a smooth parameterisation of a strictly convex submanifold $Y = \{y(u):u\in (-\delta,\delta)\times U_0^*\}$ for sufficiently small constants $\varepsilon,\delta > 0$, and that any smooth geodesic from $y(u)$ in the inward normal direction of $Y$ will intersect $U$ tangentially.
	
	First, we show that the latter claim holds, that is, we show that $\idiff{y}{u_i}$ is normal to $\idiff{y_t}{t}$ for all $i = 1, \dots, n-1$. Note that for $i = 2,\dots,n-1$ we have 
	\[
		\Pdiff{y}{u_i} = \Pdiff{y_{t}}{u_i}\vbar_{t = \varepsilon-u_1} \quad \text{and} \quad \Pdiff{y}{u_1} = \left(\Pdiff{y_t}{u_1} - \Pdiff{y_t}{t}\right)\vbar_{t = \varepsilon-u_1}.
	\]
	Now consider the function
	\[
		f_i = \left\langle\Pdiff{y_t}{u_i}, \Pdiff{y_t}{t} \right\rangle.
	\]
	Then taking derivatives with respect to $t$ we get,
	\begin{align*}
		\dot f_i &= \left \langle \cdiff{t}\Pdiff{y_t}{u_i}, \Pdiff{y_t}{t}\right\rangle\\
		&= \frac{1}{2} \pdiff{u_i}\norm{\Pdiff{y_t}{t}}^2 = 0.
	\end{align*}
	Thus $f_i$ is constant with respect to $t$, so we may compute its value at $t=0$,
	\[
		f_i = \left\langle\Pdiff{x}{u_i}, \Pdiff{x}{u_1} \right\rangle.
	\]
	That is, $f_1 = 1$ and $f_i = 0$ for $i = 2,\dots,n-1$. We also note that for all $i = 2,\dots,n-1$, we have
	\[
		\left\langle \Pdiff{y}{u_i}, \Pdiff{y_t}{t} \right\rangle = f_i = 0.
	\]
	Hence our claim holds for $i=2,\dots,n-1$, leaving only the case where $i=1$.
	But a simple computation shows that the claim holds in this case as well, as follows,
	\[
		\left\langle \Pdiff{y}{u_1}, \Pdiff{y_t}{t}\right\rangle = \left\langle \Pdiff{y_t}{u_1}, \Pdiff{y_t}{t}\right\rangle - \norm{\Pdiff{y_t}{t}}^2 = f_1 - 1 = 0.
	\]
	Hence $\idiff{y}{u_i}$ is normal to $\idiff{y_t}{t}$ for all $i = 1, \dots, n-1$.
	Our next claim is that $y$ is a smooth parameterisation of a submanifold $Y$ in $M$. To prove that the claim holds, it suffices to show that the vectors $\{\idiff{y}{u_i}(0)\}_{i=1}^{n-1}$ are linearly independent when $\varepsilon>0$ is sufficiently small. Suppose this does not hold, then there is some $\overline\varepsilon>0$ such that for all $0<\varepsilon\leq\overline\varepsilon$ the vectors $\{\idiff{y}{u_i}(0)\}_{i=1}^{n-1}$ are linearly dependent. Pick a normal coordinate chart about $y(0)$, and let $\{E_1,\dots,E_n\}$ be the orthonormal frame associated with this chart. Recall that $\partial B_\varepsilon(x_0^*)$ is a strictly convex submanifold of $\partial K$ with a unit normal given by $\idiff{x}{u_1}$. Therefore,
	\begin{equation}\label{eq:x1_normal_to_xi}
		\left\langle \Pdiff{x}{u_1},\Pdiff{x}{u_i}\right\rangle = 0 \textrm{ for all } i = 2,\dots,n-1.
	\end{equation}
	Also recall that $x$ is a unit-speed geodesic in the $u_1$-direction, i.e. $\norm{\idiff{x}{u_1}} = 1$. Hence, 
	\begin{equation}\label{eq:x1j_normal_to_x_1}
		\left\langle \nabla_{\Pdiff{x}{u_j}}\Pdiff{x}{u_1},\Pdiff{x}{u_1}\right\rangle = 0 \textrm{ for all } j = 1,\dots,n-1.
	\end{equation}
	Furthermore, differentiating \cref{eq:x1_normal_to_xi} with respect to $u_1$ we obtain the following identity,
	\begin{equation}\label{eq:x1j_and_xij_cancel}
		\left\langle \nabla_{\Pdiff{x}{u_1}}\Pdiff{x}{u_1},\Pdiff{x}{u_i}\right\rangle + \left\langle \Pdiff{x}{u_1},\nabla_{\Pdiff{x}{u_1}}\Pdiff{x}{u_i}\right\rangle = 0 \textrm{ for all } i = 2,\dots,n-1.
	\end{equation}
	Now combining the two equations, we find the following
	\begin{equation*}
		\left\langle \nabla_{\Pdiff{x}{u_1}}\Pdiff{x}{u_1},\Pdiff{x}{u_i}\right\rangle = 0 \textrm{ for all } i = 1,\dots,n-1.
	\end{equation*}
	The case where $i=1$ follows directly from \cref{eq:x1j_normal_to_x_1}, by setting $j=1$. Thus we have shown that $\nabla_{\idiff{x}{u_1}}\idiff{x}{u_1}$ is a normal to $\partial K$. We should remark here that it is an inward pointing normal. Since $\partial K$ is strictly convex, it follows that $\norm{\nabla_{\idiff{x}{u_1}}\idiff{x}{u_1}}>0$. Furthermore, since the vectors $\{\idiff{x}{u_i}(0)\}_{i=2}^{n-1}$ are linearly independent, and $\nabla_{\idiff{x}{u_1}}\idiff{x}{u_1}$ is orthogonal to $\idiff{x}{u_i}$ for all $i = 2,\dots, n-1$, it follows that \[\mathcal{L} = \left\{\nabla_{\Pdiff{x}{u_1}}\Pdiff{x}{u_1}\right\}\cup\left\{\Pdiff{x}{u_i}\right\}_{i=2}^{n-1},\] is also a linearly independent set. We then write the parameterisation $x(u)$ locally as $(x_1(u),\dots,x_n(u))$, and $\nabla_{\idiff{x}{u_1}}\idiff{x}{u_1} = (X_1,\dots,X_{n})$. Consider the matrix $B$ whose rows are the vectors in $\mathcal{L}$,
	\[B = \left[
	\begin{matrix}
	  X_1(0) & \dots & X_n(0) \\
	  \idiff{x_1}{u_2}(0) & \dots & \idiff{x_n}{u_2}(0) \\
	  \vdots & \ddots & \vdots \\
	  \idiff{x_1}{u_{n-1}}(0) & \dots & \idiff{x_n}{u_{n-1}}(0)
	\end{matrix}
	\right].\]
	$B$ has rank $n-1$, since its rows are linearly independent. Thus there is a column we can remove from $B$ while maintaining its rank. Without loss of generality, assume we may remove the last column. Then the submatrix obtained by removing the last column of B,
	\[
		B' = \left[
	\begin{matrix}
	  X_1(0) & \dots & X_{n-1}(0) \\
	  \idiff{x_1}{u_2}(0) & \dots & \idiff{x_n}{u_2}(0) \\
	  \vdots & \ddots & \vdots \\
	  \idiff{x_1}{u_{n-1}}(0) & \dots & \idiff{x_{n-1}}{u_{n-1}}(0)
	\end{matrix}
	\right],
	\]
	has nonzero determinant. We will use the fact that $\det B' \neq 0$ to arrive at a contradiction to the assumption that the vectors $\{\idiff{y}{u_i}(0)\}_{i=1}^{n-1}$ are linearly dependent for all ${0<\varepsilon\leq\overline\varepsilon}$. We write $y(u)$ locally as $(y_1(u),\dots,y_n(u))$ in our normal coordinate chart. Consider the square matrix
	\[A = \left[
	\begin{matrix}
	  \idiff{y_1}{u_1}(0) & \dots & \idiff{y_{n-1}}{u_1}(0) \\
	  \vdots & \ddots & \vdots \\
	  \idiff{y_1}{u_{n-1}}(0) & \dots & \idiff{y_{n-1}}{u_{n-1}}(0)
	\end{matrix}
	\right].\]
	Since the vectors $\{\idiff{y}{u_i}\}_{i=1}^{n-1}$ are linearly dependent, so are the rows of $A$. Thus $\det A = 0$. Before we proceed to showing a contradiction, we must examine $\idiff{y}{u_i}$ more closely. Working within our normal coordinate neighbourhood, consider the Taylor expansion of $y$,
	\begin{equation}\label{eq:y_taylor_convex_front_construction}
		y(u) = x(u) + (\varepsilon-u_1)\Pdiff{x}{u_1} - \frac{1}{2}(\varepsilon-u_1)^2\sum_{i,j,k=1}^{n}\Gamma_{ij}^k(u)\Pdiff{x_i}{u_1}\Pdiff{x_j}{u_1}E_k + O((\varepsilon-u_1)^3).
	\end{equation}
	We will make use of this expansion in the latter stages of the proof as well. For now we highlight two consequences,
	\begin{align*}
		\Pdiff{y}{u_1} &= \Pdiff{x}{u_1} - \Pdiff{x}{u_1} + (\varepsilon - u_1) \Pdifftwo{x}{u_1} + (\varepsilon-u_1)\sum_{i,j,k=1}^{n}\Gamma_{ij}^k(u)\Pdiff{x_i}{u_1}\Pdiff{x_j}{u_1}E_k + O((\varepsilon-u_1)^2)\\
		&= (\varepsilon - u_1) \nabla_{\Pdiff{x}{u_1}}\Pdiff{x}{u_1} + O((\varepsilon-u_1)^2).
	\end{align*}
	Hence,
	\begin{equation}\label{eq:der_y1}
		\Pdiff{y}{u_1}(0) = \varepsilon\nabla_{\Pdiff{x}{u_1}}\Pdiff{x}{u_1}(0) + O(\varepsilon^2),
	\end{equation}
	\begin{equation}\label{eq:der_yi}
		\Pdiff{y}{u_i}(0) = \Pdiff{x}{u_i} + O(\varepsilon) \textrm{ for all } i = 2,\dots,n-1.
	\end{equation}
	Now consider the matrix $A'$ obtained by dividing the first row of $A$ by $\varepsilon$,
	\[
		A' = \left[
		\begin{matrix}
		  X_1(0) + O(\varepsilon) & \dots & X_{n-1}(0) + O(\varepsilon)  \\
		  \idiff{x_1}{u_2}(0) + O(\varepsilon)  & \dots & \idiff{x_n}{u_2}(0) + O(\varepsilon)  \\
		  \vdots & \ddots & \vdots \\
		  \idiff{x_1}{u_{n-1}}(0) + O(\varepsilon)  & \dots & \idiff{x_{n-1}}{u_{n-1}}(0) + O(\varepsilon) 
		\end{matrix}
		\right].
	\]
	Notice that $\lim_{\varepsilon\to 0} A' = B'$. However,
	\[
		\det B' = \lim_{\varepsilon\to 0} \det A' = (\lim_{\varepsilon\to 0} \varepsilon^{-1})\det A = 0.
	\]
	This is a contradiction since, as we have shown, $\det B' \neq 0$. Therefore $y$ is indeed a smooth parameterisation of a smooth submanifold $Y$ in $M$, for some sufficiently small $\varepsilon>0$.
	
	To conclude the proof we must now show the existence of a positive lower bound on the curvature of $Y$. We will continue to work in the normal coordinate chart about $y(0)$ as before. We denote the unit normal field to $y(u)$ as $N(u)$ and the shape operator of $Y$ by $s_Y$, then $s_Y W = \nabla_{W} N$ for all vectors $W$ tangent to $Y$. Since the vectors $\idiff{y}{u_i}(0)$ are linearly independent, it suffices to calculate the terms
	\[
		s_Y^{ij}=\left\langle s_Y\frac{\idiff{y}{u_i}(0)}{\norm{\idiff{y}{u_i}(0)}}, \frac{\idiff{y}{u_j}(0)}{\norm{\idiff{y}{u_j}(0)}} \right\rangle,
	\]
	and show that there is a positive lower bound for all $i,j = 1,\dots,n-1$.
	 As we have shown prior, we know that $N(u) = \idiff{y_t}{t}\vert_{t=\varepsilon-u_1}$. Thus, by \cref{eq:y_taylor_convex_front_construction},
	\begin{equation*}
		N(u) = \Pdiff{x}{u_1} - (\varepsilon-u_1)\sum_{i,j,k=1}^{n}\Gamma_{ij}^k(u)\Pdiff{x_i}{u_1}\Pdiff{x_j}{u_1}E_k + O((\varepsilon-u_1)^2).
	\end{equation*}
	Now at $u=0$ for $i,j = 2,\dots,n-1$, by \cref{eq:der_yi} we have
	\begin{equation*}
		s_Y\Pdiff{y}{u_i}(0) = \nabla_{\Pdiff{y}{u_i}}N(0) = \nabla_{\Pdiff{x}{u_i}}\Pdiff{x}{u_1}(0) + O(\varepsilon),
	\end{equation*}
	\begin{equation*}
		\left\langle \nabla_{\Pdiff{y}{u_i}}N(0), \Pdiff{y}{u_j}(0) \right\rangle = \left\langle \nabla_{\Pdiff{x}{u_i}}\Pdiff{x}{u_1}(0), \Pdiff{x}{u_j}(0) \right\rangle + O(\varepsilon)>K_B+O(\varepsilon),
	\end{equation*}
	\begin{equation}\label{eq:norm_der_yi}
		\norm{\Pdiff{y}{u_i}(0)} = \norm{\Pdiff{x}{u_i} + O(\varepsilon)} = 1 + O(\varepsilon^{1/2}).
	\end{equation}
	Recall that $\partial B_\varepsilon(x_0^*)$ was a strictly convex submanifold with unit normal $\idiff{x}{u_1}$. Hence at $u=0$, provided we choose $\varepsilon>0$ to be sufficiently small, for any $K_Y>0$ we have
	\[
		s_Y^{ij}=\left\langle s_Y\frac{\idiff{y}{u_i}(0)}{\norm{\idiff{y}{u_i}(0)}}, \frac{\idiff{y}{u_j}(0)}{\norm{\idiff{y}{u_j}(0)}} \right\rangle  > K_Y \textrm{ for all } i,j=2,\dots,n-1.
	\]
	We will now proceed to show that there exists a lower bound for $s_Y^{i1}$, for all $i = 2,\dots, n-1$. Using \cref{eq:der_y1} and \cref{eq:der_yi}, one can find the following,
	\[
		\left\langle s_Y\Pdiff{y}{u_i}(0),\Pdiff{y}{u_1}(0)\right\rangle = - \left\langle \nabla_{\Pdiff{y}{u_i}}\Pdiff{y}{u_1}(0), N(0) \right\rangle = -\varepsilon\left\langle \nabla_{\Pdiff{x}{u_i}}\nabla_{\Pdiff{x}{u_1}}\Pdiff{x}{u_1}(0), \Pdiff{x}{u_1}\right\rangle + O(\varepsilon^2).
	\]
	Recall that $\nabla_{\idiff{x}{u_1}}\idiff{x}{u_1}$ is an inward normal to $\partial K$. Hence
	\[
		\left\langle s_Y\Pdiff{y}{u_i}(0),\Pdiff{y}{u_1}(0)\right\rangle > \varepsilon\mincurv \norm{\nabla_{\Pdiff{x}{u_1}}\Pdiff{x}{u_1}(0)} + O(\varepsilon^2).
	\]
	Where $\mincurv>0$ is the lower bound on the curvature of $\partial K$. Once again using \cref{eq:der_y1}, along with \cref{eq:norm_der_yi}
	\begin{equation}\label{eq:norm_der_y1}
		\norm{\Pdiff{y}{u_1}(0)}^2 = \varepsilon^2\norm{\nabla_{\Pdiff{x}{u_1}}\Pdiff{x}{u_1}(0)}^2 + O(\varepsilon^3).
	\end{equation}
	\[
		\left\langle s_Y\frac{\idiff{y}{u_i}(0)}{\norm{\idiff{y}{u_i}(0)}}, \frac{\idiff{y}{u_1}(0)}{\norm{\idiff{y}{u_1}(0)}} \right\rangle > \frac{\mincurv\norm{\nabla_{\idiff{x}{u_1}}\idiff{x}{u_1}(0)} + O(\varepsilon)}{\sqrt{\norm{\nabla_{\idiff{x}{u_1}}\idiff{x}{u_1}(0)}^2 + O(\varepsilon)}} \to \mincurv \textrm{ as } \varepsilon\to 0.
	\]
	Therefore, provided $\varepsilon$ is sufficiently small, $s_Y^{i1}$ ($i=2,\dots,n-1$) is bounded below by some $K_Y>0$ which depends only on the minimum principal curvature $\mincurv$ of $\partial K$ and our choice of $\varepsilon$.
	For the last curvature term, $s_Y^{11}$, we compute the derivative of $N$,
	\begin{align*}
		\Pdiff{N}{u_1} &= \Pdifftwo{x}{u_1} + \sum_{i,j,k=1}^{n}\Gamma_{ij}^k(u)\Pdiff{x_i}{u_1}\Pdiff{x_j}{u_1}E_k + O(\varepsilon-u_1)\\
			&= \nabla_{\Pdiff{x}{u_1}}\Pdiff{x}{u_1} + O(\varepsilon-u_1).
	\end{align*}
	Then at $u = 0$ we have,
	\begin{equation*}
		\left\langle \Pdiff{N}{u_1}(0), \Pdiff{y}{u_1}(0) \right\rangle = \varepsilon \norm{\nabla_{\Pdiff{x}{u_1}}\Pdiff{x}{u_1}(0)}^2 + O(\varepsilon^2).
	\end{equation*}
	Now recall that we picked normal coordinates about $y(0)$, so that we have
	\[
		\left\langle s_Y\Pdiff{y}{u_1}(0), \Pdiff{y}{u_1}(0) \right\rangle = \left\langle \nabla_{\Pdiff{y}{u_1}}N(0), \Pdiff{y}{u_1}(0) \right\rangle = \left\langle \Pdiff{N}{u_1}(0), \Pdiff{y}{u_1}(0) \right\rangle.
	\]
	Using \cref{eq:norm_der_y1} we may conclude the following
	\[
		\left\langle s_Y\frac{\idiff{y}{u_1}(0)}{\norm{\idiff{y}{u_1}(0)}}, \frac{\idiff{y}{u_1}(0)}{\norm{\idiff{y}{u_1}(0)}} \right\rangle = \frac{1}{\varepsilon}\frac{\norm{\nabla_{\idiff{x}{u_1}}\idiff{x}{u_1}(0)}^2 + O(\varepsilon)}{\norm{\nabla_{\idiff{x}{u_1}}\idiff{x}{u_1}(0)}^2 + O(\varepsilon)}.
	\]
	Therefore if our previous choice of $\varepsilon$ was not small enough, we may shrink $\varepsilon$ so that, $s_Y^{11}>K_Y$. We note that $K_Y$ depends only on $\mincurv$ and our choice of $\varepsilon$. Hence we have shown that $Y$ is strictly convex, i.e. its curvature is positive and bounded below by $K_Y$ depending only on $\mincurv$ and our choice of $\varepsilon$. Therefore, since $\partial K$ is compact, there exists a global lower bound $\Theta_0 > 0$, for the curvature of $Y$ for all $x_0$ and $V$.
\end{proof}

\section{Proof of \Cref{Theorem 1}}

Combining all the results in \Cref{sect:propagation} shows that any strictly convex front constructed via \Cref{proposition:convex_front_from_obstacle} will remain strictly convex when propagated forward via the billiard flow. This follows since the front may travel for distance at most $\maxdist$ between each reflection, and by \Cref{lemma:always_convex} it will remain strictly convex for a distance of at least $\maxdist\xi$. Then every $\minref$ reflections, by \Cref{lemma:minimal_reflection_angle}, the front will hit $\partial K$ at an angle of at most $\varphi_0$, at least once. By \Cref{lemma:reflection_curvature} the principal curvatures of the front will therefore be bounded below by $\globalcurv = \min\{2\mincurv\cos\varphi_0,\globalcurv_0\}$ once more. In this section we leverage these facts to prove a few additional results that will be useful in the proof of \Cref{Theorem 1}, which is given at the end of the section.

\begin{proposition}\label{proposition:convex_front_collision}
	Suppose $X$ and $Y$ are two strictly convex fronts such that for some $t_0>0$ and $x\in X$ we have $x_{t_0}=\gamma_{x,N(x)}(t_0)\in Y$. Moreover, suppose that $N_{t_0}(x) = \dot\gamma_{x,N(x)}(t_0)$ is an inward normal to $Y$, and that for any principal curvatures $k_X$ and $k_Y$ of $X$ and $Y$ respectively with respect to $N_{t_0}(x)$ at $x_{t_0}$, we have $k_Y<-\frontmincurv<0<\frontmincurv<k_X$ . Let $\mathcal{G}$ be the set of points $x^*\in X$ such that $x^*_{t(x^*)}\in Y$ and $N_{t(x^*)}(x^*)$ is normal to $Y$. Then $\mathcal{G}$ is a submanifold of $X$ of dimension 0. That is, $\mathcal{G}$ is at most countable.
\end{proposition}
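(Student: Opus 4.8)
The plan is to realize $\mathcal G$ locally as the zero set of a smooth section of a rank-$(n-1)$ bundle over $X$, and then to show, by a Jacobi-field computation, that the curvature hypothesis $k_Y<-\frontmincurv<0<\frontmincurv<k_X$ forces the intrinsic derivative of this section to be an isomorphism at every point of $\mathcal G$. Since $\dim X=\dim Y=n-1$ equals the rank of the bundle, this makes $0$ a regular value, so $F^{-1}(0)=\mathcal G$ is a $0$-dimensional submanifold; as $X$ is second countable, a discrete subset of it is at most countable, which is exactly the assertion.

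Concretely, near a point $x^*\in\mathcal G$ let $t(x^*)$ be the time at which $\gamma_{x^*,N(x^*)}$ meets $Y$, and set $p(x^*)=x^*_{t(x^*)}\in Y$ with contact velocity $w(x^*)=\dot\gamma_{x^*,N(x^*)}(t(x^*))$. Because at $x^*$ the velocity $w$ is normal to $Y$, the ray meets $Y$ transversally, so by the implicit function theorem $t(\cdot)$, $p(\cdot)$ and $w(\cdot)$ are smooth near $x^*$. I would then define $F(x^*)$ to be the component of $w(x^*)$ tangent to $Y$, a section of the pullback bundle $p^*TY$; it vanishes exactly on $\mathcal G$, and at a zero its intrinsic derivative $dF\colon T_{x^*}X\to T_pY$ is well defined independently of any connection.

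First I would compute $dF$. Given $V\in T_{x^*}X$, choose a curve $x^*(\tau)$ with velocity $V$ and form the geodesic variation $\Gamma(\tau,t)=\gamma_{x^*(\tau),N(x^*(\tau))}(t)$; its variation field $J(t)=\partial_\tau\Gamma(0,t)$ is the Jacobi field of the normal congruence of $X$, which satisfies $J'(t)=s\,J(t)$ for the shape operator $s$ of the propagated front $X_t$ (the integrated form of the Riccati evolution $\dot s=R(N,\cdot)N-s^2$ used in \Cref{sect:propagation}). The structural fact I would exploit is that $N_{t_0}=w(x^*)$ is normal to both $Y$ and to $X_{t_0}$, so $T_{x_{t_0}}X_{t_0}=T_pY=N_{t_0}^\perp$; in particular $J(t_0)$ lies in this common subspace and no tangential projection is lost. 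Differentiating $w$ along $x^*(\tau)$, using that $t\mapsto\Gamma(\tau,t)$ is a geodesic and cancelling the terms produced by $\dot t$ and by the variation of the unit normal of $Y$, I expect to obtain
\[
	dF(V)=(s-s_Y)\,J(t_0),
\]
with $s_Y$ the shape operator of $Y$, both operators taken with respect to $N_{t_0}$ on $N_{t_0}^\perp$.

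It then remains to show each factor of $dF=(s-s_Y)\circ L$ is invertible, where $L\colon V\mapsto J(t_0)$. The operator $s-s_Y$ is positive definite: for any unit $u\in N_{t_0}^\perp$ the hypothesis gives $\langle su,u\rangle>\frontmincurv$ and $\langle s_Y u,u\rangle<-\frontmincurv$, whence $\langle (s-s_Y)u,u\rangle>2\frontmincurv>0$. The map $L$ is the derivative of $x^*\mapsto x^*_{t_0}$, hence an isomorphism precisely when $\gamma$ carries no focal point of $X$ at $t_0$; this holds because the principal curvatures of $X_{t_0}$ are finite (indeed exceed $\frontmincurv$) at the contact point, so $X_{t_0}$ is a nonsingular immersed hypersurface there, consistent with \Cref{lemma:always_convex}. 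Thus $dF$ is an isomorphism at every point of $\mathcal G$, each such point is isolated, and the conclusion follows. The hard part will be the Jacobi-field computation of $dF$: one must differentiate a quantity evaluated at the \emph{moving} contact time $t(x^*)$ and moving base point $p(x^*)\in Y$, carefully track the $\dot t$ and normal-variation terms, and invoke $T_{x_{t_0}}X_{t_0}=T_pY=N_{t_0}^\perp$ together with $J'(t_0)=s\,J(t_0)$ to collapse the projections into the clean form above; everything downstream is then immediate from the stated curvature bounds and the front regularity of \Cref{sect:propagation}.
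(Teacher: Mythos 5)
Your proposal is correct and takes essentially the same route as the paper: the paper works in Fermi coordinates about $X$, realizes $\mathcal G$ as the zero set of $g=(g_1,\dots,g_{n-1})$ with $g_i=\langle \idiff{y}{x_i},N\rangle$ (which is exactly your section $F$ expressed in the basis $\{\idiff{y}{x_i}\}$ of $T_pY$), and shows that its derivative, the Hessian of the hitting time $t$, is the difference of the second fundamental forms of $Y$ and of the propagated front at the contact point, hence bounded below by $2\frontmincurv>0$, so $0$ is a regular value. Your invariant formulation via the tangential projection of the arrival velocity and the factorization $dF=(s-s_Y)\circ L$ is the same computation in coordinate-free Jacobi-field language, with the same reliance on \Cref{lemma:always_convex} for the nondegeneracy of $L$ (which the paper uses implicitly through the validity of the Fermi chart up to time $t_0$).
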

\begin{proof}
	Shrink $X$ and $Y$ such that for all $x\in X$ there is a $t(x)>0$ such that $x_{t(x)} \in Y$.
	We begin by taking Fermi coordinates about $X$ (cf. chapter 5 in \cite{LEERIEMANN}). Then for any $x\in X$ we have $x = (x_1,\dots, x_{n-1}, 0)$, and $y(x) = x_{t(x)} = (x_1,\dots,x_{n-1},t(x))\in Y$. Note that the outward unit normal to $X$ in these coordinates is $N = (0,\dots,0,1)$. Let $g_i(x) = \langle\idiff{y}{x_i}, N\rangle$ for each $i = 1,\dots n-1$. Then we have
	\[
		\Pdiff{y}{x_i} = \Pdiff{x}{x_i} + \Pdiff{t}{x_i}N.
	\]
	Hence, taking inner products with $N$ on both sides, we get
	\[
		\Pdiff{t}{x_i} = \langle \Pdiff{y}{x_i}, N \rangle = g_i.
	\]
	Now taking the covariant derivative we get
	\[
		\nabla_{\Pdiff{y}{x_i}}\Pdiff{y}{x_j} = \nabla_{\Pdiff{y}{x_i}}\Pdiff{x}{x_j} + \Pdifftwov{t}{x_i}{x_j}N + \nabla_{\Pdiff{y}{x_i}} N.
	\]
	Since $\nabla_N N = 0$ we know that
	\[
		\langle \nabla_{N}\Pdiff{x}{x_j}, N\rangle = N\langle \Pdiff{x}{x_j}, N \rangle = 0.
	\]
	Therefore we note that 
	\[
		\langle \nabla_{\Pdiff{y}{x_i}}\Pdiff{x}{x_j}, N\rangle = \langle \nabla_{\Pdiff{x}{x_i}}\Pdiff{x}{x_j} + \Pdiff{t}{x_i}\nabla_N\Pdiff{x}{x_j}, N\rangle = \langle \nabla_{\Pdiff{x}{x_i}}\Pdiff{x}{x_j}, N\rangle.
	\]
	Allowing us to conclude,
	\[
		\Pdifftwov{t}{x_i}{x_j} = \langle \nabla_{\Pdiff{y}{x_i}}\Pdiff{y}{x_j}, N \rangle - \langle \nabla_{\Pdiff{x}{x_i}}\Pdiff{x}{x_j}, N\rangle.
	\]
	Now set $g = (g_1,\dots,g_{n-1}):X\to \reals^{n-1}$, and let $\mathcal{G} = g^{-1}(0)$. Given $x_0\in \mathcal{G}$, we know that $N$ is the unit inward normal to $Y$ at $y(x_0)$. Hence,
	\[
		\Pdiff{g_i}{x_j} = \Pdifftwov{t}{x_i}{x_j}(x_0) > 2\frontmincurv > 0. 
	\]
	So $dg_{x_0}$ is surjective, and $\mathcal{G} = g^{-1}(0)$ is a 0-dimensional submanifold of $X$.
\end{proof}

\begin{proposition}\label{proposition:distance_bound_on_front}
	Let $q:[0,\ell]\to X$ be an arc-length parameterised curve on a front $X$. Denote by $q_t(u)\in X_t$ the curve $q$ after propagation along the geodesic flow for time $t>0$ in the normal direction. Then
	\begin{equation}\label{eq:dist_on_front_bound}
		d_{X_t}(q_t(0),q_t(\ell)) \geq d_X(q(0),q(\ell))e^{tk_{\min}}.
	\end{equation}
	Where $k_{\min}$ is the minimum principal curvature of $X_t$, and $d_X$ and $d_{X_t}$ are the distance functions on $X$ and $X_t$ respectively.
\end{proposition}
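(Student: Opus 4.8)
The plan is to reduce the distance estimate to a pointwise statement about the propagation map $\Phi_t\colon X\to X_t$, $x\mapsto x_t = \gamma_{x,N(x)}(t)$, namely that $\Phi_t$ expands lengths by at least a factor $e^{t\frontmincurv}$. Granting this, let $\alpha\colon[0,L]\to X_t$ be a unit-speed minimizing geodesic in $X_t$ from $q_t(0)$ to $q_t(\ell)$, so that $L = d_{X_t}(q_t(0),q_t(\ell))$, and pull it back to the curve $\beta = \Phi_t^{-1}\circ\alpha$ in $X$, which runs from $q(0)$ to $q(\ell)$. The expansion bound applied to $\beta'$ gives $\norm{\alpha'} \geq e^{t\frontmincurv}\norm{\beta'}$, whence $\mathrm{length}_X(\beta)\leq e^{-t\frontmincurv}L$. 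Since $\beta$ joins the two points, $d_X(q(0),q(\ell))\leq \mathrm{length}_X(\beta)\leq e^{-t\frontmincurv}d_{X_t}(q_t(0),q_t(\ell))$, and rearranging yields \cref{eq:dist_on_front_bound}.

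It remains to prove the expansion property, which I would do by a Jacobi field computation. Fix $x\in X$ and $W\in T_x X$, choose a curve $c(u)\subseteq X$ with $c(0)=x$, $c'(0)=W$, and form the geodesic variation $\Gamma(u,\tau) = \gamma_{c(u),N(c(u))}(\tau)$. Each $\tau\mapsto\Gamma(u,\tau)$ is a geodesic, so $J(\tau) = \Pdiff{\Gamma}{u}(0,\tau)$ is a Jacobi field with $J(0)=W$ and $(d\Phi_t)W = J(t)$. Because every geodesic of the variation leaves $X$ normally, $J(\tau)$ stays tangent to $X_\tau$ while $N_\tau = \Pdiff{\Gamma}{\tau}$ is its unit normal, and the symmetry of the connection gives the key identity
\begin{equation*}
	\cdiff{\tau}J = \cdiff{u}\Pdiff{\Gamma}{\tau} = \nabla_J N_\tau = s_\tau J,
\end{equation*}
where $s_\tau$ is the shape operator of $X_\tau$. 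Consequently $\frac{d}{d\tau}\norm{J}^2 = 2\langle s_\tau J, J\rangle \geq 2\frontmincurv\norm{J}^2$, since the principal curvatures of $X_\tau$ are bounded below by $\frontmincurv$ for every $\tau\in[0,t]$ by the results of \Cref{sect:propagation}. A Gr\"onwall (or direct integration) argument then yields $\norm{J(t)}\geq e^{t\frontmincurv}\norm{W}$, which is exactly $\norm{(d\Phi_t)W}\geq e^{t\frontmincurv}\norm{W}$.

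The remaining points are technical but worth flagging. The estimate $\norm{J(t)}\geq e^{t\frontmincurv}\norm{W}>0$ for $W\neq 0$ shows there are no focal points along the normal geodesics up to time $t$, so $\Phi_t$ is an immersion; combined with the fact that it is a bijection onto the embedded front $X_t$, it is a diffeomorphism, which is what legitimises pulling $\alpha$ back to $\beta$. The main obstacle is really the identity $\cdiff{\tau}J = s_\tau J$ together with the uniform curvature bound: one must be careful that $\frontmincurv$ lower-bounds the principal curvatures of \emph{every} intermediate front $X_\tau$, $0\le\tau\le t$, and not merely those of $X_t$. This is precisely what the propagation analysis of \Cref{sect:propagation} supplies, since between reflections the curvature stays above $\frontmincurv$ over distances up to $\maxdist\minref$ and is restored at each reflection.
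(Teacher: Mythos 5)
Your proof is correct and follows essentially the same route as the paper: the key computation in both is that $J = \partial_u q_t(u)$ satisfies $\cdiff{t}J = s_t J$, giving the exponential lower bound $\norm{J(t)}\geq e^{t\frontmincurv}\norm{J(0)}$. Your concluding step (pulling back the minimizing geodesic of $X_t$ through $\Phi_t^{-1}$) is in fact a slightly cleaner way of finishing than the paper's, which instead assumes $q_t$ is itself the minimal geodesic in $X_t$ and integrates $\norm{J_u(t)}$ over $u$; and your remark that $\frontmincurv$ must bound the curvature of every intermediate front $X_\tau$ correctly identifies the same implicit appeal to \Cref{sect:propagation} that the paper makes.
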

\begin{proof}
	We begin by noting that $J_u(t) = \pdiff{u}q_t(u)$ is a Jacobi field for each $u\in [0,\ell]$, since $q_t(u)$ is a variation through geodesics. We also note that
	\begin{align*}
		\cdiff{t}J_u(t) &= \cdiff{t}\pdiff{u}q_t(u)\\
		&= \cdiff{u}\pdiff{t}q_t(u)\\
		&= s_t J_u(t).
	\end{align*}
	Here $s_t$ is the shape operator of $X_t$. To bound $d(q_t(0),q_t(\ell))$ we are interested in the norm of $J_s(t)$, so consider the function $f(u,t) = \norm{J_u(t)}$. We take the derivative of $f$ in $t$ as follows:
	\begin{align*}
		\dot f(t) &= \frac{1}{\norm{J_u(t)}}\left\langle \cdiff{t}J_u(t), J_u(t) \right\rangle\\
		&= \frac{1}{\norm{J_u(t)}}\left\langle s_t J_u(t), J_u(t) \right\rangle\\
		&= \norm{J_u(t)}\left\langle s_t \frac{J_u(t)}{\norm{J_u(t)}}, \frac{J_u(t)}{\norm{J_u(t)}} \right\rangle\\
		&= f(t)\left\langle s_t\frac{J_u(t)}{\norm{J_u(t)}}, \frac{J_u(t)}{\norm{J_u(t)}} \right\rangle.
	\end{align*}
	It now follows that the solution to $f(t)$ is,
	\[
		f(t) = \exp\left(\int^t_0 \left\langle s_t\frac{J_u(t)}{\norm{J_u(t)}}, \frac{J_u(t)}{\norm{J_u(t)}} \right\rangle dt \right)\geq e^{tk_{\min}}.
	\]
	Suppose now that for a fixed $t>0$, the curve $q_t(u)$ is the minimal geodesic along $X$ between $q_t(0)$ and $q_t(\ell)$.
	\begin{align*}
		d_{X_t}(q_t(0),q_t(\ell)) &= \int_0^\ell f(t)\ du \geq \int_0^\ell e^{tk_{\min}}\ du = \ell e^{tk_{\min}}.
	\end{align*}
	This is \cref{eq:dist_on_front_bound}, as required.
\end{proof}
Owing to \cref{lemma:reflection_curvature} and \cref{proposition:distance_bound_on_front} above, we get the following corollary,
\begin{corollary}\label{lemma:dist_on_front_reflection}
	Let $q_t$ be as in \cref{proposition:distance_bound_on_front}, but under propagation along the billiard flow. That is, we allow for reflections. Suppose $q_t$ reflects at $0<t_1<t_2<\dots <t_n$ then
	\begin{equation}
		d_{X_t}(q_t(0),q_t(\ell)) \geq d_X(q(0),q(\ell))e^{t_n k_{\min}}.
	\end{equation}
\end{corollary}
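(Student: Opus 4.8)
The plan is to reduce the statement to \cref{proposition:distance_bound_on_front} by cutting the time interval at the reflection times and gluing the pieces together. Write $t_0 = 0$ and let $t_1 < \dots < t_n$ be the reflection times, so that on each open subinterval $(t_{i-1}, t_i)$ the curve is carried by the geodesic flow alone, with no reflections, and \cref{proposition:distance_bound_on_front} applies there without change. Keeping the notation of its proof, I set $J_u(t) = \pdiff{u} q_t(u)$ and $f(u,t) = \norm{J_u(t)}$; the relation $\cdiff{t} J_u = s_t J_u$ then yields, on each reflection-free subinterval,
\[
    \frac{d}{dt}\log f(u,t) = \left\langle s_t \frac{J_u(t)}{\norm{J_u(t)}}, \frac{J_u(t)}{\norm{J_u(t)}} \right\rangle \geq \frontmincurv .
\]
Here I use only that $s_t$ is self-adjoint with every eigenvalue at least $\frontmincurv$, so the quadratic form is bounded below by $\frontmincurv$ on unit vectors regardless of whether $J_u$ is a principal direction; the eigenvalue bound on each geodesic segment (of length at most $\maxdist$) is supplied by \cref{lemma:always_convex}.

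Two ingredients make the gluing legitimate. First, $f(u,\cdot)$ is continuous across each reflection time: for a wavefront the transverse Jacobi field $J_u$ itself passes continuously through a reflection, only its covariant derivative jumping according to the shape-operator law of \cref{proposition:reflection_sff}, so $\norm{J_u}$ does not jump. Second, the lower eigenvalue bound on $s_t$ survives each reflection, which is precisely \cref{lemma:reflection_curvature}: at a reflection the principal curvatures only increase, so the bound $\frontmincurv$ holding just before $t_i$ is restored (indeed improved) just after. Consequently the displayed inequality $\frac{d}{dt}\log f(u,t) \geq \frontmincurv$ holds for every $t$ in $[0, t_n]$ away from the finite set of reflection times.

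Integrating the logarithmic derivative over $[0, t_n]$ and using the continuity of $f$ to combine the pieces telescopically gives
\[
    f(u, t_n) \geq f(u, 0)\, e^{t_n \frontmincurv}.
\]
Because $\frontmincurv > 0$ the function $f(u, \cdot)$ is nondecreasing, so $f(u, t) \geq f(u, t_n) \geq f(u,0)\, e^{t_n \frontmincurv}$ at the final time $t \geq t_n$; this is exactly why $t_n$, rather than $t$, is the natural exponent. The distance bound then follows verbatim from the final step of \cref{proposition:distance_bound_on_front}: choosing $q$ so that its image on the terminal front is the minimising geodesic and integrating $f(u,t)$ in $u$ over $[0,\ell]$ produces $d_{X_t}(q_t(0), q_t(\ell)) \geq d_X(q(0), q(\ell))\, e^{t_n \frontmincurv}$.

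The main obstacle is justifying the two gluing ingredients rigorously rather than pictorially. The continuity of $J_u$ through a reflection must be argued at the level of the wavefront, since distinct points of $q$ meet $\partial K$ at slightly different times and there is no single reflection instant for the whole curve; the object that genuinely crosses the reflection is the shape operator, whose jump is governed by \cref{proposition:reflection_sff} and whose effect on eigenvalues is \cref{lemma:reflection_curvature}. Equally, one must ensure the eigenvalue bound $\frontmincurv$ persists over the whole propagation and not just one segment, which is the combined mechanism recalled at the start of this section (decay controlled over distance $\maxdist\minref$ by \cref{lemma:always_convex}, and a good-angle reflection every $\minref$ steps restoring the bound via \cref{lemma:minimal_reflection_angle} and \cref{lemma:reflection_curvature}). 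Once these are granted, the corollary is simply the piecewise form of \cref{proposition:distance_bound_on_front}, with the exponential growth factors from the successive segments multiplying together.
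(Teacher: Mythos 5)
Your proposal is correct and matches the paper's intended argument: the paper gives no explicit proof, presenting the corollary as an immediate consequence of \Cref{proposition:distance_bound_on_front} applied on each reflection-free segment together with \Cref{lemma:reflection_curvature} (and \Cref{lemma:always_convex}) to preserve the curvature lower bound across reflections, which is exactly your piecewise-gluing scheme. Your additional remarks on the continuity of $\norm{J_u}$ through a reflection and on why $t_n$ rather than $t$ appears in the exponent are sound elaborations of details the paper leaves implicit.
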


\newpage

We define the following submanifold of $TS$:
\[
	\T S_K = \{(x,v)\in TS:x\in \partial K\}\cup\TS
\]
\begin{proposition}\label{proposition:tangent_twice}
	There exists a countable family $\{\Xi_i\}$ of codimension 2 smooth submanifolds of $\T S_K$ such that for any $\sigma\in\T S_K\backslash (\cup_i\Xi_i)$ the billiard ray generated by $\sigma$ is tangent to $\partial K$ at most once.
\end{proposition}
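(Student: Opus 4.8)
The plan is to exhibit the set of $\sigma\in\T{S_K}$ whose billiard ray meets $\partial K$ tangentially at least twice as a countable union of smooth codimension-2 submanifolds, by first sorting such rays into combinatorial types and then realising the second tangency as a clean, transverse condition imposed on the first. To every ray tangent to $\partial K$ at least twice I associate its \emph{type} $\tau$: the number $m\geq 0$ of transverse reflections occurring between its first and second tangencies, together with the ordered list of components of $K$ that it meets in between. There are only countably many types, so it suffices to cover, for each fixed $\tau$, the corresponding rays by countably many codimension-2 submanifolds. Since a billiard ray is determined by any one of its tangency configurations, I parameterise the type-$\tau$ rays by their first tangency, a point of the bundle $\mathcal B=\{(x,v)\in TS: x\in\partial K,\ v\in T_x\partial K\}$ of vectors tangent to $\partial K$. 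Inside the $\partial K$-component of $\T{S_K}$ the condition $v\in T_x\partial K$ is a single independent equation, so $\mathcal B$ has codimension $1$; consequently a smooth codimension-1 subset of $\mathcal B$ is a codimension-2 subset of $\T{S_K}$, exactly as required. The remaining representatives of a bad ray — its further transverse crossings of $\partial K$ and, when the ray reaches $\partial S$, its point in $\TS$ — are smooth piecewise billiard-flow images of these and hence also lie in countably many codimension-2 submanifolds, so everything reduces to the first-tangency picture.

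Next I fix the type $\tau$ and a double-tangency ray of that type with first tangency $(x_0,V)$. Applying \Cref{proposition:convex_front_from_obstacle} at $(x_0,V)$ produces a strictly convex front $Y_1$, of minimum principal curvature at least $\globalcurv_0$, whose inward normal rays are precisely the first-tangency rays near $(x_0,V)$; I then propagate $Y_1$ forward through the $m$ reflections prescribed by $\tau$. By the discussion opening this section the resulting front $X$ stays strictly convex with principal curvatures bounded below by $\frontmincurv$ however large $m$ is: \Cref{lemma:always_convex} controls each free flight of length at most $\maxdist$, while \Cref{lemma:minimal_reflection_angle} and \Cref{lemma:reflection_curvature} restore the curvature lower bound $\globalcurv$ every $\minref$ reflections, and \Cref{lemma:dist_on_front_reflection} keeps distinct points of the front separated, so that $X$ remains an embedded strictly convex front. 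The second tangency occurs exactly when a forward normal ray of $X$ grazes $\partial K$; encoding this grazing by a signed defect $\psi$ on $\mathcal B$ (negative for a transverse hit, zero for a tangency, positive for a miss), the type-$\tau$ double-tangency rays are precisely $\{\psi=0\}$.

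The main work is to show that $\psi$ is a submersion where it vanishes, for this is what makes $\{\psi=0\}$ a smooth codimension-1 submanifold of $\mathcal B$, hence codimension-2 in $\T{S_K}$. Here strict convexity is indispensable. I build, via \Cref{proposition:convex_front_from_obstacle} at the second tangency, a companion front $Y_2$ oriented so that the incoming ray direction is its inward normal; with $X$ strictly convex and $Y_2$ strictly concave relative to that direction, the configuration is exactly that of \Cref{proposition:convex_front_collision}. What I use is not merely the $0$-dimensionality of the collision set but the estimate behind it: the Hessian of the arrival time of $X$ on $Y_2$ is positive definite, bounded below by $2\frontmincurv$. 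This positivity shows that the incidence of the ray on $\partial K$ at the second encounter varies to first order as the first tangency moves, so no nontrivial variation of the first tangency can keep the ray grazing $\partial K$; that is, $d\psi\neq 0$ on $\{\psi=0\}$. Covering $\mathcal B$ and the second-tangency locus by the countably many charts in which the fronts $Y_1,Y_2$ are defined, and taking the union over the countably many types $\tau$, yields the family $\{\Xi_i\}$.

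I expect the principal obstacle to be precisely this submersion step. Two points require care. First, one must verify that the curvature signs genuinely match the hypotheses of \Cref{proposition:convex_front_collision} — that the propagated first front is strictly convex while the second-tangency front is strictly concave with respect to the incoming ray — with the correct orientation of $Y_2$; this is a sign computation that I have glossed over. Second, one must reconcile the per-slice, $0$-dimensional collision set of \Cref{proposition:convex_front_collision} (which fixes the second configuration through the single front $Y_2$) with the genuinely codimension-1 recurrence locus inside a slice (where the second configuration is free): the point is that it is the positive-definite arrival-time Hessian, rather than the $0$-dimensional conclusion itself, that furnishes the clean grazing. It is exactly this non-degeneracy, uniform over all types and charts, that upgrades the exceptional set from merely measure-zero to a countable union of codimension-2 submanifolds.
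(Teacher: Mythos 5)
Your overall architecture --- stratify the doubly tangent rays by combinatorial type, transport everything back to the first tangency by the local billiard flow, and exhibit the second tangency as the transversal zero set of a defect function --- is essentially the paper's plan (its sets $\Xi^\alpha_\beta$ and the diffeomorphism $\Upsilon$ play the role of your types and of your reduction to $\mathcal B$). The gap is exactly where you predicted it, in the submersion step, and the fix you propose does not work. \Cref{proposition:convex_front_collision} controls the $\reals^{n-1}$-valued orthogonality defect $g_i=\langle \partial y/\partial x_i,N\rangle$: its differential is the positive-definite matrix $\partial^2 t/\partial x_i\partial x_j$, and that is why the set of rays meeting the second front \emph{orthogonally} is $0$-dimensional. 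Your $\psi$ is a single scalar equation (``grazes $\partial K_{i_2}$''), and its zero set is the very double-tangency locus you are trying to bound, which is positive-dimensional in general: a variation that slides the second tangency point along $\partial K_{i_2}$, or rotates the tangent direction there, changes $g$ to first order while keeping $\psi=0$. So positive-definiteness of the arrival-time Hessian cannot yield $d\psi\neq 0$; the two conditions have different codimensions ($n-1$ versus $1$) and coincide only when $n=2$. Your own phrase ``no nontrivial variation of the first tangency can keep the ray grazing'' would make $\{\psi=0\}$ discrete, contradicting the codimension-$1$ structure you need. Consistently with this, the paper never invokes \Cref{proposition:convex_front_collision} in its proof of this proposition.

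What is actually needed, and what the paper supplies, is one explicit variation along which the grazing defect has nonzero derivative. The paper takes the curve $\omega(s)$ on the propagated front through the second tangency configuration whose initial velocity is the parallel translate $N^*$ of the outward normal to $\partial K_{i_2}$ at the tangency point; the expansion estimate of \Cref{proposition:distance_bound_on_front} (via \Cref{lemma:dist_on_front_reflection}) guarantees that the corresponding Jacobi field is nondegenerate at the tangency, and strict convexity of $\partial K_{i_2}$ then forces the ray to enter $K_{i_2}$ for $s<0$ and to miss it for $s>0$, i.e.\ the defect changes sign transversally along this one curve, which suffices for transversality to the codimension-$1$ tangency locus. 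This is the content you must reproduce; it rests on the convexity of the \emph{obstacle} at the second tangency and the nonvanishing of a specific Jacobi field, not on the Hessian of the arrival time. A secondary point: your $\psi$ must be checked to be smooth near its zero set, where the billiard map degenerates; the paper sidesteps this by phrasing the condition as membership of $\Phi(\sigma)$ in the fixed codimension-$1$ submanifold $(d\psi_W)^{-1}(0)$, computed along the modified flow $\chi$ that ignores reflections off $K_{i_1}$ and $K_{i_2}$.
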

\begin{proof}
	First, suppose that $\sigma_0\in \uT{\partial K}$ is such that there is some $t_0>0$ and $\sigma_1\in\uT{\partial K}$ such that $\genflow_{t_0}(\sigma_0) = \sigma_1$. Let $0<i_1,i_2\leq d$ be integers such that $\pr_1(\sigma_0)\in\partial K_{i_1}$ and $\pr_1(\sigma_0)\in\partial K_{i_2}$.
	Let $\widetilde V$ be an open neighbourhood of $\sigma_0$ in $\uT{S}$, such that $\pr_1(\widetilde V)\cap\partial K_i=\emptyset$ for all $i\neq i_1$. We set $V = \pr_1(\widetilde V)$ and construct a strictly convex front $X$, via \Cref{proposition:convex_front_from_obstacle}, such that any smooth geodesic from $X$ in the inward normal direction to $X$ will intersect $\partial K_{i_1}$ tangentially. In particular, let $\gamma$ be the smooth geodesic from $x_0$ in the direction $\sigma_0$ and denote the intersection of $\gamma$ and $X$ by $x_0'$. Also let $\varepsilon_0>0$ be the time such that $\pr_1\circ\flow_{\varepsilon_0}(\sigma_0)=x_0'$. Note that we may shrink $\widetilde V$ and $\varepsilon_0$ as needed, shrinking $X$ and bringing it closer to $x_0$ in the process, while ensuring that $X\cap \partial K = \emptyset$. Denote the outward unit normal to $X$ at $x$ by $N_X(x)$, and let
	\[
		\widetilde X = \{(x,N_X(x)):x\in X\},
	\]
	\[
		\widehat X = \{(x,v):x\in X,v\in TS\}.
	\]
	Note that $\widetilde X$ is an $n-1$ dimensional submanifold of $TS$ while $\widehat X$ is a submanifold of codimension 1 in $TS$. Possibly shrinking $\widetilde V$, there is a smooth function $g:\widetilde V\to\reals$ such that $\flow_{g(\sigma)}(\sigma)\in\widehat X$ for each $\sigma \in \widetilde V$. Now consider the map $f:\widetilde V\to\widehat X$ given by $\sigma\mapsto\flow_1(g(\sigma)\sigma)$, recalling that $\norm{\sigma}=1$ for all $\sigma\in\widetilde V$ it follows that $\norm{f(\sigma)}=g(\sigma)$. Note that $f$ is a diffeomorphism between $\widetilde V$ and $\widehat X$. We shall now define similar neighbourhoods for $\sigma_1$. Let $\widetilde W$ be an open neighbourhood of $\sigma_1$ in $\uT S$, and set $W=\pr_1(\widetilde W)$. Shrink $\widetilde W$ so that $W\cap\partial K_i =\emptyset$ for all $i\neq i_2$. 
	
	Let $\widetilde K = K \backslash (K_{i_1}\cup K_{i_2})$.
	 Define the curves $\chi:[0,t_0+\mindist/2]\times\widetilde V\to \uT S$ as the billiard rays in $S_{\widetilde K}$ generated by $\sigma\in\widetilde V$. That is, $\chi_t(\sigma)$ is the billiard ray generated by $\sigma$, which ignores reflections on $\partial K_{i_1}$ and $\partial K_{i_2}$. Pick $t_0^*>0$ such that $|t_0-t_0^*|<\mindist/2$, and let $\widetilde Y = \chi_{t_0^*}(\widetilde X)$. Let $Y = \pr_1(\widetilde Y)$, and set
	 \[
	 	\widehat Y = \{(y,v):y\in Y,v\in TS\}.
	 \]
	Once again, shrinking $\widetilde V$ or $\widetilde W$ if necessary, there is a smooth function $\mathring{g}: \widetilde W\to\reals$ such that $\flow_{\mathring{g}(\sigma)}(\sigma)\in\widehat Y$ for all $\widetilde W$. Then we may define the diffeomorphism $\mathring{f}:\widetilde W\to\widehat Y$ given by $\sigma\mapsto\flow_1(\mathring{g}(\sigma)\sigma)$. Thus we have a diffeomorphism $\Phi = \mathring{f}^{-1}\circ\chi_{t_0^*}\circ f:\widetilde V\to\widetilde W$, and in particular $\Phi(\sigma_0)=\sigma_1$.
	
	Let $\varphi_V:\widetilde V\to\reals^2$ and $\varphi_W:\widetilde W\to\reals^2$ be local defining functions for $\uT\partial K$ in $\widetilde V$ and $\widetilde W$ respectively. Denote $V_0 = \varphi_V^{-1}(0)$ and $W_0 = \varphi_W^{-1}(0)$. We shall show that $V_0\cap\Phi^{-1}(W_0)$ has codimension at least 1 in $V_0$. Let $\psi_V:V\to\reals$ and $\psi_W:W\to\reals$ be local defining functions for $\partial K$ in $V$ and $W$ respectively. It follows that $\grad\psi_V/\norm{\grad\psi_V}$ is the outward unit normal to $\partial K$ on $V\cap\partial K$ and similarly for $W\cap\partial K$. Consider the sets $(d\psi_V)^{-1}(0),(d\psi_W)^{-1}(0)\subseteq TS$. Note that 0 is a regular value of $d\psi_V$, since
	\[
		\nabla (d\psi_V) (X,Y) = -\norm{\grad \psi_V}h_{\partial K}(X,Y) \geq 0\textrm{ for all }X,Y\in \mathfrak{X}(\partial K),
	\]
	where $h_{\partial K}$ is the scalar second fundamental form of $\partial K$. Thus $(d\psi_V)^{-1}(0)$ is a codimension 1 submanifold of $TS$, and similarly for $(d\psi_W)^{-1}(0)$. Now set $V' = (d\psi_V)^{-1}(0)\cap \widetilde V$ and $W' = (d\psi_W)^{-1}(0)\cap \widetilde W$. Note that both $V'$ and $W'$ are codimension 1 submanifolds of $\widetilde V$ and $\widetilde W$.
	Consider the codimension 1 submanifold $Y'$ of $\widehat Y$ given by $\mathring{f}(W')$.
	We claim that $Y'$ and $\widetilde Y$ are transversal near $(y_0,\mu_0) = \chi_{t^*_0}\circ f (\sigma_0)$. Denote the outward unit normal of $Y$ at $y$ by $\mu(y)$. Let $\widehat t_0 = t_0-t^*_0$, and $N^*_0$ be the outward unit normal of $\partial K_{i_2}$ at $x_1$. Denote by $N^*:[t_0^*,t_0]\to\uT S$ the vector field given by parallel translation of $N^*_0$ along the smooth geodesic in $S$ from $y_0$ to $x_1$ such that $N^*(t_0) = N^*_0$. For some small $\delta>0$, let $\lambda:(-\delta,\delta)\to Y$ be the smooth geodesic in $Y$ such that $\lambda(0) = y_0$ and $\lambda'(0) = N^*(t_0^*)$. Note that $N^*(t_0^*)$ is indeed tangent to $Y$ by construction. Let $\omega(s) = (\lambda(s),\mu(\lambda(s)))$, and $p(s) = \pr_1\circ\flow_{\widehat t_0}(\omega(s))$. Then $p((-\delta,\delta))$ is diffeomorphic to $\lambda((-\delta,\delta))$ via the shift along the geodesic flow. Indeed, suppose there exist distinct points $\omega_1,\omega_2\in\widehat Y$ such that $\pr_1\circ\flow_{\widehat t_0}(\omega_1)=\pr_1\circ\flow_{\widehat t_0}(\omega_2)$. Owing to \Cref{lemma:always_convex}, \Cref{lemma:reflection_curvature} and \Cref{lemma:minimal_reflection_angle}, $Y$ is strictly convex. Thus by \Cref{proposition:distance_bound_on_front}, we must have 
	\[d(\pr_1\circ\flow_{\widehat t_0}(\omega_1),\pr_1\circ\flow_{\widehat t_0}(\omega_2))>0,\]
	contradicting our assumption. Now for $\delta>0$ sufficiently small, since $p'(0)=N^*_0$, we have $p((-\delta,0))\subseteq K_{i_2}$ and $p((0,\delta))\cap K_{i_2}=\emptyset$. Thus for $s\in(-\delta,0)$, the smooth geodesic
	\[
		r(s) = \{\pr_1\circ\flow_t(\omega(s)):0\leq t\leq \widehat t_0\},
	\]
	must intersect $\partial K_{i_2}$. Furthermore, since $\partial K_{i_2}$ is strictly convex, provided $r(s)$ remains sufficiently close to $x_1$, the smooth geodesic $r(s)$ cannot be tangent to $\partial K_{i_2}$. For $s\in(0,\delta)$, since $r(0)$ is tangent to $\partial K_{i_2}$, which is strictly convex, and the Jacobi field of $r(s)$ at $r(0)$ is precisely $N^*$ by construction, it follows that for sufficiently small $\delta$, the smooth geodesic $r(s)$ cannot intersect $\partial K_{i_2}$ at all. Thus the curve $\omega(s)$ was constructed such that $\omega(s)\in\widetilde Y$, $\omega(0)=(y_0,\mu_0)\in Y'$ and $\omega(s)$ is transversal to $Y'$ in $\widehat Y$. Now since $Y'$ has codimension 1 in $\widehat Y$, this is sufficient to claim that near $(y_0,\mu_0)$ the submanifolds $\widetilde Y$ and $Y'$ are transversal as desired. It follows that $Y'\cap\widetilde Y$ has codimension 1 in $\widetilde Y$.
	
	Note that $\widetilde Z = f^{-1}\circ\chi_{t^*_0}^{-1}(\widetilde Y)$ is an $n-1$ dimensional submanifold of $V_0$ by construction. Now consider the $n-2$ dimensional submanifold $Z' = f^{-1}\circ\chi_{t^*_0}^{-1}(Y'\cap\widetilde Y)$. Since $Z'$ has codimension 1 in $\widetilde Z$, we must have $\widetilde Z\backslash Z'\neq\emptyset$. Take any $\sigma\in \widetilde Z\backslash Z'$, then $\Phi(\sigma)\in\widetilde W\backslash W'$ by construction. But $W_0\subseteq W'$, hence it follows that $\sigma\in V_0\backslash \Phi^{-1}(W_0)$. Now owing the the fact that $\widetilde V$ is connected, and since $V_0\cap\Phi^{-1}(W_0)$ is closed, we can conclude that $V_0\cap\Phi^{-1}(W_0)$ must have codimension at least 1 in $V_0$.
	
	Therefore we have shown that there is a submanifold, $V_0\cap\Phi^{-1}(W_0)$, of codimension 2 or greater in $\T{S_K}$ containing $\sigma_0$. Since $\sigma_0$ was arbitrary, this is true for all $\sigma\in\uT{\partial K}$ which generate a ray that is tangent to $\partial K$ more than once. Let $\gamma_\sigma(t) = \genflow_t(\sigma)$ for all $t\in\reals$ and $\sigma\in\T S_K$. Let $\Xi$ denote the set of points $\sigma\in\T S_K$ such that $\gamma_\sigma$ is tangent to $\partial K$ more than once. For any pair of integers $\alpha<\beta$ let $\Xi^\alpha_\beta$ be the subset of $\Xi$ containing all $\sigma$ such that $\gamma_\sigma$ has a tangency after $\alpha$ transversal reflections and a tangency after transversal $\beta$ reflections. Here the number of reflections in the negative time direction is represented as negative integers. That is, if $t_\alpha<0$ is the time such that $\gamma_\sigma(t_\alpha)\in\uT{\partial K}$ then $\alpha<0$, and similarly for $\beta$. Note that the numbers of reflections $\alpha$ and $\beta$ are both counted starting from $\gamma_\sigma(0)$. We will show that for every $\sigma\in\Xi^\alpha_\beta$ there is a neighbourhood $U\subseteq\T S_K$ such that $U\cap\Xi^\alpha_\beta$ can be covered by a countable union of codimension 2 submanifolds of $U$. Note that by our argument above this immediately holds for the case where either $\alpha=0$ or $\beta = 0$. Suppose that both $\alpha\neq 0$ and $\beta\neq 0$. Then given any $\sigma_{-1}\in\Xi_\beta^\alpha$, let $t_\alpha$ and $t_\beta$ be the times of the first two tangencies of $\gamma_\sigma$. Set $\sigma_0 = -\gamma_{\sigma_{-1}}(t_\alpha)$.
	 By our previous argument there is a codimension 2 submanifold $\Psi(\sigma_0)\subseteq\T S_K$ containing $\sigma_0$. Since there are exactly $\alpha$ transversal reflections along $\gamma_{\sigma_{-1}}$ between $\sigma_{-1}$ and $\sigma_0$, there are open neighbourhoods $U(\sigma_0),U(\sigma_{-1})\subseteq\T S_K$ of $\sigma_0$ and $\sigma_{-1}$ respectively, and a diffeomorphism $\Upsilon:U(\sigma_0)\to U(\sigma_{-1})$ via the billiard flow in $S_K$. Recall that $\Psi(\sigma_0) = V_0\cap\Phi^{-1}(W_0)$ via our previous argument and by construction $\Upsilon^{-1}(\Xi_\beta^\alpha\cap U(\sigma_{-1}))\subseteq \Psi(\sigma_0)$. Thus $\Xi_\beta^\alpha\cap U(\sigma_{-1})\subseteq \Upsilon(\Psi(\sigma_0))$, which completes our argument.
\end{proof}

\begin{proposition}\label{proposition:trapped_rays_dimension_zero}
Given an open set $\widetilde V\subseteq T_1\partial K$, let $V$ be the set of points such that for all $(x,\omega)\in V$ the ray $\gamma(x,\omega)$ from $x$ in the direction $\omega$ is trapped, but never tangent. Then $V$ has topological dimension 0.
\end{proposition}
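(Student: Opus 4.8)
The plan is to show that every point of $V$ has arbitrarily small neighbourhoods that are open and closed relative to $V$; for a separable metric space this is equivalent to having topological (covering) dimension $0$. Fix $\sigma_0\in V$ and encode each ray by its \emph{reflection itinerary}, the bi-infinite sequence of components $K_{i_j}$ met at its successive transversal reflections. For each integer $m\geq 1$ let $O_m$ be the set of $\sigma$ in a fixed small neighbourhood of $\sigma_0$ in $\widetilde V$ whose ray reflects transversally, and stays in the interior of $S$, at each of its first $m$ forward and first $m$ backward reflections, and let $C_m(\sigma_0)$ be the connected component of $\sigma_0$ in $O_m$. I would prove that the $C_m(\sigma_0)$ form a neighbourhood basis of $\sigma_0$ and that each $C_m(\sigma_0)\cap V$ is clopen in $V$.

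First I would check that $C_m(\sigma_0)$ is relatively clopen. Reflecting transversally off a given component while staying interior is an open, stable condition, so $O_m$ is open and the itinerary is a continuous, hence locally constant, function on $O_m$; thus $C_m(\sigma_0)$ is open and carries the itinerary of $\sigma_0$. Since the rays of $V$ never reach $\partial S$ and are never tangent, each $\sigma\in V$ near $\sigma_0$ reflects transversally and interiorly for any finite number of steps, so $V$ is contained in the interior of $O_m$. The topological boundary of a connected component of an open set is contained in the boundary of that set, so $\partial C_m(\sigma_0)\subseteq\partial O_m$; and a ray in $\partial O_m$ is exactly one for which some prescribed reflection degenerates, i.e.\ the ray grazes, becoming tangent to $\partial K$ (or exits through $\partial S$, which again happens by grazing). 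Such rays lie outside $V$ by definition, so $\partial C_m(\sigma_0)\cap V=\emptyset$ and $C_m(\sigma_0)\cap V$ is clopen in $V$.

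The crux is to show $\operatorname{diam} C_m(\sigma_0)\to 0$ as $m\to\infty$, so that these clopen sets actually shrink to $\sigma_0$. This rests on the fact that strict convexity of the obstacles makes the billiard flow uniformly hyperbolic along its trapped trajectories. Propagating a strictly convex front transverse to $\gamma_{\sigma_0}$ (for instance a small geodesic sphere) and applying \Cref{lemma:always_convex}, \Cref{lemma:reflection_curvature} and \Cref{lemma:minimal_reflection_angle}, the principal curvatures stay bounded below by $\globalcurv>0$, so by \Cref{proposition:distance_bound_on_front} and \Cref{lemma:dist_on_front_reflection} the unstable Jacobi fields grow by a definite factor between consecutive reflections. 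Since consecutive reflections occur on distinct components (strict convexity forbids an immediate re-hit) lying at distance $\geq\mindist$, the free path between reflections is $\geq\mindist$, giving a uniform per-reflection expansion factor $\geq e^{\mindist\,\globalcurv}>1$ along unstable directions and, by time reversal, a contraction factor $\leq e^{-\mindist\,\globalcurv}<1$ along stable directions. Consequently the unstable width of a forward itinerary cylinder and the stable width of a backward itinerary cylinder both decay exponentially in $m$; as $C_m(\sigma_0)$ constrains $m$ forward and $m$ backward reflections, its diameter is $O(e^{-m\mindist\,\globalcurv})\to 0$. Combining the two steps, $\{C_m(\sigma_0)\cap V\}_{m\geq 1}$ is a clopen neighbourhood basis of $\sigma_0$ in $V$; as $\sigma_0$ was arbitrary, $V$ has a basis of clopen sets and hence topological dimension $0$.

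I expect the main obstacle to be the hyperbolicity estimate of the third paragraph. Turning the front-expansion bound of \Cref{proposition:distance_bound_on_front}, which controls \emph{intrinsic} distances along a propagated front, into a genuine contraction of the cylinder widths in $T_1\partial K$ requires setting up invariant stable and unstable cone fields on the section together with a bounded-distortion argument, so that growth of Jacobi fields translates into exponential shrinking of the nested cylinders rather than merely exponential growth of arc lengths; it is also essential here that trapped rays be controlled in \emph{both} time directions, since forward itineraries alone pin down only the unstable coordinates. The other delicate point is the identification of $\partial O_m$: one must verify that the only transition from completing the prescribed transversal reflections to failing to do so is through a genuine tangency with $\partial K$, so that $\partial C_m(\sigma_0)$ is indeed disjoint from the trapped, never-tangent set $V$.
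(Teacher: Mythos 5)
Your first step (that the sets $C_m(\sigma_0)\cap V$ are relatively clopen in $V$) is sound, but the second step is where the entire content of the proposition lives, and it is not established. As you acknowledge yourself, \Cref{proposition:distance_bound_on_front} and \Cref{lemma:dist_on_front_reflection} control only the \emph{expansion} of intrinsic distances along a propagated strictly convex front, i.e.\ the unstable direction; they give no contraction in stable directions, and the passage from ``forward cylinders are thin in unstable directions and backward cylinders are thin in stable directions'' to ``two-sided cylinders have small diameter'' requires a transversal stable/unstable splitting spanning the tangent space of the section, together with the cone-field and bounded-distortion machinery you defer. None of that exists elsewhere in the paper, and it would have to be built from scratch on the grazing section $T_1\partial K$ where $V$ actually lives, which is delicate (for $n\ge 3$ this set has dimension $2n-3$, and it is not the standard collision section on which billiard hyperbolicity is usually set up). So the crux of your argument is an assertion, not a proof.

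The paper's proof avoids two-sided cylinders entirely, and the device it uses is exactly the missing idea. It first applies \Cref{proposition:convex_front_from_obstacle} to realise the tangential rays issuing from $\widetilde V$ as the normal rays of a single strictly convex front, thereby identifying $V$ with a subset $U$ of that front --- in effect placing $V$ inside one unstable-type leaf. It then maps $U$ into the sequence space $F=\{1,\dots,d\}^{\mathbb N}$ (with the product metric, a standard $0$-dimensional space) by the \emph{forward} itinerary alone, and shows this map is a homeomorphism onto its image: injectivity and continuity of the inverse both follow from the one-sided expansion estimate of \Cref{lemma:dist_on_front_reflection}, since two points of the front with the same itinerary up to time $t$ are at distance $O(e^{-tk_{\min}})$, while continuity is your locally-constant-itinerary observation. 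Since subsets of $F$ have dimension $0$, so does $U$, hence $V$. If you want to rescue your clopen-basis scheme, the fix is the same: pass to the convex front first, so that forward itineraries already separate points and your clopen sets can be taken to be one-sided cylinders whose diameters shrink by \Cref{lemma:dist_on_front_reflection} alone, with no stable/unstable product structure needed.
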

\begin{proof}
	Given an open set $\widetilde V\subseteq T_1\partial K$, let $V'$ be the set of points $(x,\omega)\in\widetilde V$ such that the ray $\gamma(x,\omega)$ from $x$ in the direction $\omega$ is tangent to $K$ at some point $y \neq x$. Possibly shrinking $\widetilde V$, construct a convex front $X$ from $\widetilde V$ as in \Cref{proposition:convex_front_from_obstacle}. Let $\widetilde U$ be the convex front along with its outward unit normal field $N(x)$. So $\widetilde U = \{(x,N(x)):x\in X\}$. Let $V$ be the set of points $(x,\omega)\in \widetilde V\backslash V'$ for which the ray $\gamma(x,\omega)$ is trapped. Then there are corresponding sets $U', U\subseteq \widetilde U$ to $V', V$.
	Let $\widetilde F = \{1,\dots, d\}$ and define the metric space
	\[
		F = \prod_{i=1}^\infty \widetilde F,\quad \eta (a,b) = \sum_{i=1}^\infty\frac{1}{2^i}(1-\delta_{a_ib_i}),
	\]
	with the metric $\eta$ defined for all $a,b \in F$, where $a = (a_1,a_2,\dots)$, $b = (b_1, b_2, \dots)$ and $\delta_{ij}$ is the Kronecker delta. i.e. $\delta_{a_ib_i} = 1$ if $a_i = b_i$ and 0 otherwise. It is well known that $F$ is a 0-dimensional metric space (cf. \cite{MR0482697} pg. 22). We claim that $U$ is homeomorphic to a subset of $F$. We shall prove this claim by constructing a homeomorphism $f: U \to F$ as follows:
	Begin by ordering the components of $K$ in a fixed order, say $K_1,\dots,K_d$. Now given $(x,\omega)\in U$, since $\gamma(x,\omega)$ is trapped, there is a sequence $a_1, a_2, \dots$, with $a_i\in \widetilde F$ such that the $i$'th reflection point of $\gamma(x,\omega)$ is on $K_{a_i}$. We define $f(x,\omega) = (a_1,a_2,\dots)$.
	
	The function $f$ is injective, in the following manner. Suppose there exist distinct $(x,\omega),(x',\omega')\in U$ such that $f(x,\omega) = f(x',\omega') = (a_1,a_2,\dots)$. We consider the evolution of the front $\widetilde U$ as it reflects only off the obstacles $K_{a_1},K_{a_2},\dots$, in that exact order. Let $q(t) = \gamma(x,\omega)(t)$ and $q'(t) = \gamma(x',\omega')(t)$. Then applying \Cref{lemma:dist_on_front_reflection} we have \[d_X(q(0),q'(0))\leq d_{X_t}(q(t),q'(t))e^{-tk_{\min}}.\] 
	So $d_X(q(0),q'(0))\to 0$ as $t\to \infty$, that is $(x,\omega) = (x',\omega')$. Hence $f$ is injective, and therefore we consider $f$ as a bijection onto its image $f(U)\subseteq F$.
	
	Now consider a fixed $(x,\omega)\in U$, and let $(a_1,a_2,\dots)=f(x,\omega)$. For any $\varepsilon > 0$ there is an integer $N$ such that $2^{1-N}<\varepsilon$. Note that $(x,\omega)\in U$ guarantees $\gamma(x,\omega)$ is never tangent to an obstacle. Therefore there is a $\mu > 0$ such that the for any $(x',\omega')$ in the $\mu$-ball, $B_\mu$, around $(x,\omega)$ in $\widetilde U$, the ray $\gamma(x',\omega')$ transversally reflects off the obstacles $K_{a_1},K_{a_2},\dots,K_{a_N}$ in that order. Thus $f(x',\omega')$ and $f(x,\omega)$ must agree up to the $N$-th coordinate. Then for any $(x',\omega')\in B_\mu$ we have
	\[
		\eta (f(x',\omega'),f(x,\omega)) = \sum_{i=1}^\infty\frac{1}{2^i}(1-\delta_{f(x',\omega')_if(x,\omega)_i})\leq \sum_{i=N}^\infty \frac{1}{2^i} = \frac{1}{2^{N-1}}<\varepsilon.
	\]
	Hence $f$ is continuous. We shall complete the proof by showing that $f$ has a continuous inverse. Given $(x,\omega),(x',\omega')\in U$, let $(a_1,a_2,\dots) = f(x,\omega)$ and $(b_1,b_2,\dots) = f(x',\omega')$. Let $N$ be the largest integer such that $a_i = b_i$ for all $1\leq i \leq N$. Let $t_N>0$ be the time just after both $\gamma(x,\omega)$ and $\gamma(x',\omega')$ undergo the $N$-th reflection. Set $C = d_{X_{t_N}}(x_{t_N},x'_{t_N})$, where $x_{t_N} = \pi_1\circ\gamma(x,\omega)(t_n)$, and $x'_{t_N} = \pi_1\circ\gamma(x',\omega')(t_n)$. Then by \Cref{lemma:dist_on_front_reflection}, 
	\begin{equation}\label{eq:exp_dist_trapped_ray}
		d_X(x,x')< Ce^{-t_Nk_{\min}}.
	\end{equation}
	Now for any $\varepsilon>0$, pick $N$ so that $Ce^{-N\mindist k_{\min}}<\varepsilon$. Then for any $(x',\omega')$ such that $f(x,\omega)$ and $f(x',\omega')$ agree up to the $N$-th reflection, by \cref{eq:exp_dist_trapped_ray}, and since $t_N>N\mindist$, we must have $d_X(x,x')<\varepsilon$. Take any $0<\delta < 2^{-N}$, if $\eta(f(x,\omega),f(x',\omega'))<\delta$ it follows that $f(x,\omega)$ and $f(x',\omega')$ must agree up to at least the $N$-th reflection. Thus $\eta(f(x,\omega),f(x',\omega'))<\delta$ must imply $d_X(x,x')<\varepsilon$, i.e. the inverse of $f$ is also continuous. Therefore $f$ is a homeomorphism onto $f(U)$. Now since $f(U)\subseteq F$, it must have dimension 0, and hence $U$ must also have dimension 0 as desired.
\end{proof}

\begin{proof}[Proof of \Cref{Theorem 1}]
	Suppose that there exists some point $x_0\in \partial K$ such that $x_0\not\in \partial L$. Let $V\in T_{x_0}\partial K$ be any vector tangent to $\partial K$ at $x_0$. Then there is some neighbourhood $U\subseteq\partial K$ of $x_0$ such that $U\cap\partial L=\emptyset$. By \Cref{proposition:convex_front_from_obstacle} we may construct a convex front $X$ from $U$ (possibly shrinking it) such that the rays from $X$ in the inward normal direction intersect $U$ tangentially. Furthermore, let $\widetilde X$ be the set of points in $X$ for which the ray in $S_K$ in the normal direction is trapped, or tangent to $\partial K$ more than once. We also let $\widetilde X$ include any points in $x$ for which the corresponding ray in $S_L$ is tangent to $\partial L$ more than once. Then it follows from \Cref{proposition:tangent_twice} and \Cref{proposition:trapped_rays_dimension_zero} that $\dim(\widetilde X) \leq n-2$. Let $X' = X\backslash \widetilde X$, then $\dim(X')>0$. Given any $x\in X'$, note that the ray $\gamma^K_x$ which intersects $X$ at $x$ orthogonally, is tangent to $\partial K$ exactly once by construction. Then since $\gamma^K_x$ is not trapped, there is some $\sigma_x\in \TS$ for which $\gamma^K_x(t) = \pr_1\circ\genflow^K_t(\sigma_x)$. 
	Now since $K$ and $L$ have the same set of travelling times, for each $x\in X'$ there is exactly one point $z(x)\in \partial L$ such that the ray $\gamma^L_x(t) = \pr_1\circ\genflow^L_t(\sigma_x)$ is tangent to $\partial L$ at $z(x)$. Denote the set of all such tangent points by $Z = \{z(x) : x\in X'\}$. Note that by construction $Z\cap\partial K=\emptyset$ and that $z:X'\to Z$ is a homeomorphism. 	
	 We claim that there exists some $z^*\in \partial L$ such that for all neighbourhoods $\Omega\subseteq\partial L$ of $z^*$ we have $\dim(\Omega\cap Z)>0$. Assume the contrary. Then for every $z\in\partial L$ there is a neighbourhood $\Omega_z\subseteq\partial L$ such that $\dim(\Omega_z\cap Z)=0$. Consider the open cover $\{\Omega_z\}_{z\in\partial L}$ of $\partial L$. Since $\partial L$ is compact, we may take a finite subcover $\{\Omega_{z_i}\}_{i=1}^\alpha$. It follows $Z$ is covered by the finite cover $\{\Omega_{z_i}\cap Z\}_{i=1}^\alpha$. Recalling that $\dim(\Omega_{z_i}\cap Z)=0$, this implies that $\dim(Z) = 0$, a contradiction.
	 
	 Let $t^*_K(x)$ and $t^*_L(x)$ be such that $\gamma^K_x(t^*_K(x))$ and $\gamma^L_x(t^*_L(x))$ are the points of tangency with $\partial K$ and $\partial L$ respectively. Suppose that $t^*_K(x_0)<t^*_L(x_0)$, then provided $U$ is sufficiently small, this hold true for all $x\in X$, i.e. $t^*_K(x)<t^*_L(x)$. Now since $K$ and $L$ are equivalent up to tangency, it follows that
	\[
		\gamma^K_x(t) = \gamma^L_x(t) \textrm{ for all } 0\leq t\leq t^*_K(x), \textrm{ and }x\in X.
	\]

	 We now construct a convex front $Y$ (via \Cref{proposition:convex_front_from_obstacle}) from a neighbourhood $\Omega\subseteq\partial L$ of $z^*$, such that the outward normal $N(y)$ of $Y$ points towards $X$. i.e. $\pr_1\circ\genflow_{t(y)}^{L}(y,N(y))\in X$ for some $t(y)>0$. Define the set $\mathcal{Q}$ of points $z(x)\in Z$ such that $\gamma_x^L$ intersects both $X$ and $Y$ orthogonally. Since $\gamma^K_x(t) = \gamma^L_x(t)$ for all $0\leq t\leq t^*_K(x)$, and $x\in X$, it follows that $\gamma_x^L$ will intersect $X$ always. But by construction, $\gamma_x^L$ will intersect $Y$ for all $z(x)\in\Omega\cap Z$. Thus  $\mathcal{Q} = \Omega\cap Z$. Therefore $\dim(\mathcal{Q})>0$. However \Cref{proposition:convex_front_collision} states that $\dim(\mathcal{Q}) = 0$, a contradiction. Note that we assumed that $t^*_K(x_0)<t^*_L(x_0)$. If we assume instead that $t^*_K(x_0)>t^*_L(x_0)$, then by letting $\mathcal{Q}$ be the set of points $z(x)\in Z$ such that $\gamma_x^K$ intersects both $X$ and $Y$ orthogonally, we reach the same contradiction. Thus our proof is complete.
	
\end{proof}

\begin{proposition}\label{lemma:gen_pos_equiv}
	Suppose that $K$ and $L$ have the same set of travelling times, and that both $K$ and $L$ are in general position, respectively. Then $(K,L)\in\mathcal {K}_T$, that is, $K$ and $L$ are equivalent up to tangency.
\end{proposition}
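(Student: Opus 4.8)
The plan is to fix $\sigma\in T_1 S$ generating a trajectory tangent to $\partial K$ and to show $\gamma^K_\sigma(t)=\gamma^L_\sigma(t)$ for $0\le t\le t^*$. By the symmetry of the roles of $K$ and $L$ we may assume $t^*=t_K^*\le t_L^*$, so it suffices to prove agreement up to the first $K$-tangency time $t_K^*$. The point of this normalisation is that on $[0,t_K^*)$ neither ray has yet been tangent to its obstacle: every contact of $\gamma^K_\sigma$ with $\partial K$ and of $\gamma^L_\sigma$ with $\partial L$ occurring strictly before $t^*$ is a transversal reflection, so on this interval both are regular billiard trajectories and the travelling-time functions $t_K,t_L$ are smooth near $\sigma$.

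First I would record the trivial part. The two rays issue from the same point $x(\sigma)\in\partial S$ with the same initial direction, and the obstacles play no role until the first contact of either ray with its own obstacle; hence $\gamma^K_\sigma$ and $\gamma^L_\sigma$ coincide on the initial geodesic segment up to that first contact. If neither ray reflects before $t^*$, both are the same pure geodesic on $[0,t^*]$ and we are done. The real work is to show that any transversal reflection of one ray before $t^*$ is matched, at the same point and with the same tangent hyperplane, by a reflection of the other ray, for then the reflected directions agree and the agreement continues past the reflection.

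To match reflections I would argue by induction on the number of reflections before $t^*$. Suppose the two trajectories coincide up to and including the $m$-th reflection. They then follow a common geodesic until the first time one of them meets its obstacle again, and the issue is to show these next contacts occur at the same point with coinciding tangent hyperplanes. Here I would exploit that $\sT_K=\sT_L$ as germs on the open set of regular directions near $\sigma$, so $t_K$ and $t_L$ agree there together with all derivatives. Were the next contacts to differ — say $\gamma^K_\sigma$ reflecting at a point through which $\gamma^L_\sigma$ passes transversally — the two trajectories would separate, and propagating this separation forward with the convex-front machinery of \Cref{sect:propagation}, together with the exponential separation estimates \Cref{proposition:distance_bound_on_front} and \Cref{lemma:dist_on_front_reflection} (as in the coding argument of \Cref{proposition:trapped_rays_dimension_zero}), would produce distinct exit data for $K$ and for $L$ over an open family of nearby regular directions, contradicting $\sT_K=\sT_L$. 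Hence the next contact points coincide; equality of incoming and outgoing directions there forces $T_p\partial K=T_p\partial L$, the reflections are identical, and the induction continues. General position is used to keep the symbolic type of the trajectory locally constant, so that these germ comparisons are made between trajectories of a single combinatorial type and no third component interferes between consecutive reflections.

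The step I expect to be the main obstacle is precisely this separation-implies-different-travelling-times argument: turning a pointwise divergence of the two trajectories into a genuine, detectable discrepancy in the travelling-time data over an open set of directions, rather than one that could be masked by a compensating change elsewhere along the trajectory. The exponential divergence of \Cref{lemma:dist_on_front_reflection} is the tool that rules out such compensation under the curvature and reflection-angle bounds of \cref{condition:neg_curve} and \cref{condition:pos_curve}. Finally, the endpoint $t=t^*$, where the $(m{+}1)$-st contact degenerates from a transversal reflection into the tangency defining $t_K^*$, is recovered by continuity, yielding $\gamma^K_\sigma=\gamma^L_\sigma$ on the whole of $[0,t^*]$ and hence $(K,L)\in\mathcal{K}_T$.
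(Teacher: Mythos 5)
There is a genuine gap, and it lies exactly where you expect it: the induction step. Your plan is to match the transversal reflections of $\gamma^K_\sigma$ and $\gamma^L_\sigma$ one at a time on $[0,t^*)$, and to rule out a mismatch by arguing that a divergence of the two trajectories would ``produce distinct exit data over an open family of nearby regular directions.'' That implication is not available: the travelling-time data record only the pair of endpoints on $\partial S$ and the total time, so two trajectories with different reflection patterns can a priori produce identical entries of $\sT_K=\sT_L$; ruling this out is essentially the content of \Cref{Theorem 1} itself, not something you may invoke while proving this preliminary proposition. \Cref{lemma:dist_on_front_reflection} gives exponential separation of points \emph{within a single propagated front}; it does not by itself convert a pointwise divergence of $\gamma^K_\sigma$ from $\gamma^L_\sigma$ into a discrepancy between the two travelling-time sets. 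So the step you yourself flagged as ``the main obstacle'' is not closed.

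The idea you are missing is the one that makes the proposition tractable: general position forces the tangency to occur at the \emph{first or last} point of contact of the ray with its obstacle, since a tangency strictly between two transversal reflections would yield a smooth geodesic segment meeting three distinct components of the obstacle. Hence, after reversing the direction of the ray if necessary, one may assume $\gamma^K_\sigma$ is tangent to $\partial K$ at its first contact point, so there are \emph{no} reflections on $[0,t^*_K)$ and your induction is vacuous. The only case requiring work is the mismatched one, where $\gamma^K_\sigma$ is tangent at its first contact but $\gamma^L_\sigma$ is tangent only at its last; the paper excludes this by building convex fronts at the two tangency points via \Cref{proposition:convex_front_from_obstacle} and running the collision argument of \Cref{proposition:convex_front_collision} exactly as in the proof of \Cref{Theorem 1}. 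Once both rays are tangent at their first contact points, neither has reflected before time $\min\{t^*_K,t^*_L\}$, so both coincide with the free geodesic generated by $\sigma$ and the equality $\gamma^K_\sigma(t)=\gamma^L_\sigma(t)$ on $[0,t^*]$ is immediate.
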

\begin{proof}
	We begin by noting a few facts. First, if $\gamma$ is a geodesic in $S_K$ that is tangent to $\partial K$, then the point of tangency is either the first or last point of contact between $\gamma$ and $\partial K$. This follows directly from the general position condition on $K$ (and $L$) as follows. Let $t_1,\dots,t_j$ be the times when $\gamma$ reflects off the obstacle. Suppose $\gamma$ had a tangency between the first and last reflection, at some time $t_i,\ 1<i<l$. Then $\gamma|_{[t_{i-1},[t_i+1]}$ is a smooth geodesic intersecting $K$ on three distinct components. Thus contradicting the general position condition on $K$. This clearly holds for $L$ as well. Second, note that if $\sigma\in\TS\backslash\Trap{\partial S}$ is such that geodesic $\gamma_\sigma^K$ generated by $\sigma$ in $S_K$ is tangent to $\partial K$, then there is a singularity in the travelling time function at the point corresponding to $\sigma$. Since $K$ and $L$ have the same travelling time function, it follows that $\sigma$ must also generate a geodesic $\gamma_\sigma^L$ in $S_L$ which is tangent to $\partial L$. Note that $\gamma_\sigma^L$ and $\gamma_\sigma^K$ are not necessarily the same geodesic.
	
	Now using the first fact, $\gamma_\sigma^K$ we may assume that $\gamma_\sigma^K$ is tangent to $\partial K$ at the first point of contact. If not we may simply reverse the direction of $\gamma_\sigma^K$. We claim that $\gamma_\sigma^L$ is also tangent to $\partial L$ at the first point of contact. Using the first and second facts, $\gamma_\sigma^L$ must be tangent to $\partial L$ at either the first or last reflection points (or both). Suppose, for contradiction that $\gamma_\sigma^L$ is tangent to $\partial L$ only at the last point of reflection. Let $\tau_K, \tau_L>0$ be the times of the first and last reflection points of $\gamma_\sigma^K$ and $\gamma_\sigma^L$ respectively. By \Cref{proposition:convex_front_from_obstacle} we may construct convex fronts $X$ and $Y$ about $\gamma_\sigma^K(\tau_K)$ and $\gamma_\sigma^L(\tau_L)$ in the directions $\dot\gamma_\sigma^K(\tau_K)$ and $\dot\gamma_\sigma^L(\tau_L)$ respectively. It now follows from the same argument as in the proof of \Cref{Theorem 1} that we reach a contradiction to \Cref{proposition:convex_front_collision}. Hence $\gamma_\sigma^K$ and $\gamma_\sigma^L$ must both be tangent at their first points of reflection. If $t_K^*,t_L^*>0$ are the times of tangency for $\gamma_\sigma^K$ and $\gamma_\sigma^L$ respectively, it follows that $\gamma_\sigma^K(t) = \gamma_\sigma^L(t)$ for all $0\leq t \leq \min\{t_K^*,t_L^*\}$. That is, general position implies that $K$ and $L$ are equivalent up to tangency.
\end{proof}

\bibliographystyle{elsarticle-num}

\end{document}